\newcommand{\I}{\mathrm{i}}
\newcommand{\D}{\mathrm{d}}
\newcommand{\wh}{\widehat}
\newcommand{\lb}{\left(}
\newcommand{\vp}{\varphi}
\newcommand{\ve}{\varepsilon}
\newcommand{\rb}{\right)}
\newcommand{\PD}{\partial}
\newcommand{\wt}{\widetilde}
\newcommand{\Ac}{\mathcal{A}}
\newcommand{\Lc}{\mathcal{L}}
\newcommand{\Rb}{\mathbb{R}}
\newcommand{\Sb}{\mathbb{S}}
\newcommand{\Beq}{\begin{equation}}
	\newcommand{\Eeq}{\end{equation}}
\newcommand{\beq}{\begin{equation*}}
	\newcommand{\eeq}{\end{equation*}}
\newcommand{\bal}{\begin{align}}
	\newcommand{\eal}{\end{align}}
\renewcommand{\O}{\Omega}
\newcommand{\n}{\nabla}
\newcommand{\A}{\alpha}
\newcommand{\bp}{\begin{prob}}
	\newcommand{\ep}{\end{prob}}
\newcommand{\bpr}{\begin{proof}}
	\newcommand{\epr}{\end{proof}}
\renewcommand{\o}{\omega}
\newcommand{\bel}[1]{\begin{equation}\label{#1}}
	\newcommand{\ee}{\end{equation}}
\newcommand{\rr}{\mathbb{R}}
\newtheorem{theorem}{Theorem}[section]
\newtheorem{lemma}[theorem]{Lemma}
\newtheorem{proposition}[theorem]{Proposition}
\theoremstyle{definition}
\newtheorem{definition}[theorem]{Definition}
\newtheorem{remark}[theorem]{Remark}
\title[Partial data inverse problem]{An inverse problem for the relativistic Schr\"odinger equation with
partial boundary data}
\author[Krishnan and Vashisth]{Venkateswaran P. Krishnan$^{\dagger}$ and Manmohan Vashisth$^{\ddagger}$}
\address{$^{\dagger}$ TIFR Centre for Applicable Mathematics, Bangalore 560065, India. 
\newline\indent E-mail:{\tt \ vkrishnan@tifrbng.res.in}\vspace{2mm}
\newline
	{$^{\ddagger}$ TIFR Centre for Applicable Mathematics, Bangalore 560065, India.
	\newline
	\indent E-mail:{\tt\  manmohanvashisth@gmail.com}}}
\begin{document}
  
  \maketitle
\begin{abstract}
	We study the inverse problem of determining the vector and scalar potentials $\mathcal{A}(t,x)=\left(A_{0},A_{1},\cdots,A_{n}\right)$ and $q(t,x)$, respectively, in the relativistic Schr\"odinger equation
	\begin{equation*}
		\Big{(}\left(\partial_{t}+A_{0}(t,x)\right)^{2}-\sum_{j=1}^{n}\left(\partial_{j}+A_{j}(t,x)\right)^{2}+q(t,x)\Big{)}u(t,x)=0 
	\end{equation*}
	in the region $Q=(0,T)\times\Omega$, where $\Omega$ is a $C^{2}$ bounded domain in $\Rb^{n}$ for $n\geq 3$ and $T>\mbox{diam}(\Omega)$ from  partial data on the boundary $\PD Q$. We prove the unique determination of these potentials modulo a natural gauge invariance for the vector field term. 
\end{abstract}
  
\ \ \ \ \ \textbf{Keywords :} Inverse problems, relativistic Schr\"odinger equation, Carleman estimates,  partial boundary data\\

\ \ \ \textbf{ Mathematics Subject Classifications (2010):} 35L05, 35L20, 35R30
  
 \section{Introduction} \label{Introduction}
 {Let $\Omega$ be a bounded simply connected open subset of $\Rb^{n}$ with $C^{2}$ boundary $\PD \Omega$ where $n\geq 3$.}
 \color{black}For $T>\mbox{diam}(\Omega)$,  let $Q:=(0,T)\times\Omega$ and denote its lateral boundary by $\Sigma:=(0,T)\times\partial\Omega$. Consider the linear hyperbolic partial differential operator of second order with time-dependent coefficients: 
 \begin{align}\label{definition of operator}
 \Lc_{\Ac,q}u:=\Big\{\left(\partial_{t}+A_{0}(t,x)\right)^{2}-\sum_{j=1}^{n}\left(\partial_{j}+A_{j}(t,x)\right)^{2}+q(t,x)\Big\} u=0,\ (t,x)\in Q.
 \end{align}
 We denote $\Ac=(A_{0},\cdots, A_{n})$ and $A=(A_{1},\cdots, A_{n})$. Then $\Ac=(A_{0},A)$. We assume that $\Ac$ is $\Rb^{1+n}$ valued with coefficients in $C_{c}^{\infty}(Q)$ and $q\in L^{\infty}(Q)$.  The operator \eqref{definition of operator} is known as the relativistic Schr\"odinger equation  and appears in quantum mechanics and general relativity \cite[Chap. XII]{Schiff}. In this paper, we study an inverse problem related to this operator. More precisely, we are interested in  determining the coefficients in \eqref{definition of operator} from certain measurements made on suitable subsets of the topological boundary of $Q$.

 Starting with the work of Bukhge\`im and Klibanov \cite{Bukhgeuim_Klibanov_Uniqueness_1981}, there has been extensive work in the literature related to inverse boundary value problems for second order linear hyperbolic PDE. For the case when $\Ac$ is $0$ and $q$ is time-independent, the unique determination of $q$ from full lateral boundary Dirichlet to Neumann data  was addressed by Rakesh and Symes in \cite{Rakesh_Symes_Uniqueness_1988}. 
 Isakov in \cite{Isakov_IP_hyperbolic_Damping_Potential_time-independent_1991} considered the same problem with an additional time-independent time derivative perturbation, that is, with $\Ac=(A_{0}(x),0)$ and $q(x)$ and proved uniqueness results.   
 The results in \cite{Rakesh_Symes_Uniqueness_1988} and \cite{Isakov_IP_hyperbolic_Damping_Potential_time-independent_1991} were proved using geometric optics solutions inspired by the work of Sylvester and Uhlmann \cite{Sylvester_Uhlmann_Calderon_problem_1987}.   For the case of time-independent coefficients, another powerful tool to prove uniqueness results is the boundary control (BC) method pioneered by Belishev, see \cite{Belishev_Recent_progress_BC_Method_2007,Belishev_Reconstruction_Riemannian_Manifold_2010,Belishev_BC_Method_2011}. Later it was developed by Belishev, Kurylev, Katchalov, Lassas, Eskin and others; see \cite{Katchalov_Kurylev_Lassas_Book_2001} and references therein. Eskin in \cite{Eskin_New_Approach_local_2006,Eskin_New_Approcah_global_2007} developed a new approach based on the BC method for determining the time-independent vector and scalar potentials assuming $A_{0}=0$ in \eqref{definition of operator}. Hyperbolic inverse problems for  time-independent coefficients have been extensively studied by Yamamoto and his collaborators as well; see \cite{Anikonov_Cheng_Yamamoto,Bellassoued_Yamamoto,Cipolatti_Yamamoto,Hussein_Lesnic_Yamamoto,Imanuvilov_Yamamoto}.
 
 Inverse problems involving time-dependent first and zeroth order perturbations focusing on the cases $\Ac=0$ or when $\Ac$ is of the form $(A_{0},0)$ have been well studied in prior works. We refer to \cite{Ramm_Sjostrand_IP_wave_equation_potential_1991,
 	Ramm_Rakesh_Property_C_1991,Stefanov1987,Stefanov_Inverse_scattering_potential_time_dependent_1989} for some works in this direction. 
 
 Eskin in \cite{Eskin_IHP_time-dependent_2007} considered full first and zeroth order time-dependent perturbations of the wave equation in a Riemannian manifold set-up and proved uniqueness results (for the first order term, uniqueness modulo a natural gauge invariance) from boundary Dirichlet-to-Neumann data, under the assumption that the coefficients are analytic in time. Salazar removed the analyticity assumption of Eskin in \cite{Salazar_time-dependent_first_order_perturbation_2013}, and proved that the unique determination of vector and scalar potential modulo a natural gauge invariance is possible from Dirichlet-to-Neumann data on the boundary. In a recent work of Stefanov and Yang in \cite{Stefanov-Yang},\footnote{We thank Plamen Stefanov for drawing our attention to the results of this paper.} they proved stability estimates for the recovery of light ray transforms of time-dependent first- and zeroth-order perturbations for the wave equation in a Riemannian manifold setting from certain local Dirichlet to Neumann map. Their results, in particular, would give uniqueness results in suitable subsets of the domain  recovering the vector field term up to a natural gauge invariance and the zeroth-order potential term from this data.

 For the case of time-dependent perturbations, if one is interested in global uniqueness results in a finite time domain,  extra information in addition to Dirichlet-to-Neumann data is required  to prove  uniqueness results. Isakov in \cite{Isakov_Completeness_Product_solutions_IP_1991} proved unique-determination of time-dependent potentials (assuming $\mathcal{A}=0$ in \eqref{definition of operator}) from the data set given by the Dirichlet-to-Neumann data as well as the solution and the time derivative of the solution on the domain at the final time. Recently Kian in \cite{Kian_Damping_partial_data_2016} proved unique determination of time-dependent damping coefficient $A_{0}(t,x)$ (with $\Ac$ of the form, $\Ac=(A_{0},0)$) and the potential $q(t,x)$ from partial Dirichlet to Neumann data together with  information of the solution at the final time.

 In this article, we prove unique determination of time-dependent vector and scalar potentials $\mathcal{A}(t,x)$ and $q(t,x)$  appearing in \eqref{definition of operator} (modulo a gauge invariance for the vector potential) from partial boundary data. Our work is related to the result of Salazar \cite{Salazar_time-dependent_first_order_perturbation_2013} who showed uniqueness results for the relativistic Schr\"odinger operator from full Dirichlet to Neumann boundary data assuming such boundary measurements are available for infinite time. It extends the recent work of Kian \cite{Kian_Damping_partial_data_2016}, since we consider the full time-dependent vector field perturbation, whereas Kian assumes only a time derivative perturbation. We should emphasize that the approach using Carleman estimates combined with geometric optics (GO) solutions for recovering time-dependent perturbations, was first used by Kian in \cite{Kian_Uniqueness_partial_data_2016,Kian_stability_partial_data_2016}. To the best of our knowledge, for time-independent perturbations, combination of these techniques to prove uniqueness results first appeared in \cite{Bellassoued_Yamamoto}  inspired by the work of \cite{Bukhgeim_Uhlmann_Calderon_problem_partial_Cauchy_data_2002} for the elliptic case.
 
 The paper is organized as follows. In \S \ref{Main result statement}, we state the main result of the article. In \S \ref{Carleman Estimate}, we prove the Carleman estimates required to prove the existence of GO solutions, and in \S \ref{GO solutions}, we construct the required GO solutions. In \S \ref{Integral identity}, we derive the integral identity using which, we prove the main theorem in \S \ref{Proof for Uniqueness}.

 \section{Statement of the main result}\label{Main result statement}
 In this section, we state the main result of this article. We begin by stating precisely what we mean by gauge invariance. 
 
 \subsection{Gauge Invariance}\label{Definition of Gauge invariance}
 \begin{definition}
 	The vector potentials $\mathcal{A}^{(1)}, \mathcal{A}^{(2)} \in C_{c}^{\infty}(Q)$ are said to be gauge equivalent if there exists a  $\Phi\in C_{c}^{\infty}(Q)$ such that
 	\[
 	\lb\Ac^{(1)}-\Ac^{(2)}\rb(t,x)=\nabla_{t,x}\Phi(t,x).
 	\]
 \end{definition}
 \begin{proposition}\label{Gauge invariance}
 	Suppose $u_{1}(t,x)$ is a solution to the following IBVP 
 	\begin{align}\label{euqtion of interest with A=A1 and q=q1}
 	\begin{aligned}
 	&\Big[(\partial_{t}+A^{(1)}_{0}(t,x))^{2}-\sum_{j=1}^{n}(\partial_{j}+A^{(1)}_{j}(t,x))^{2}+q_{1}(t,x)\Big]u_{1}(t,x)=0,\ \ (t,x)\in Q\\
 	&\ u_{1}(0,x)=\vp(x), \ \partial_{t}u_{1}(0,x)=\psi(x),\ \ \  x\in\Omega\\
 	& \ u_{1}(t,x)=f(t,x), \ \ (t,x)\in \Sigma
 	\end{aligned}
 	\end{align}
 	and $\Phi(t,x)$ is as defined above, then $u_{2}(t,x)=e^{\Phi(t,x)}u_{1}(t,x)$ satisfies the following IBVP
 	\begin{align}\label{equation of interest with A=A2 and q=q2}
 	\begin{aligned}
 	&\Big[(\partial_{t}+A^{(2)}_{0}(t,x))^{2}-\sum_{j=1}^{n}(\partial_{j}+A^{(2)}_{j}(t,x))^{2}+q_{1}(t,x)\Big]u_{2}(t,x)=0, \ \  (t,x)\in Q\\
 	&\ u_{2}(0,x)=\vp(x),\ \partial_{t}u_{2}(0,x)=\psi(x),\ \  x\in\Omega\\
 	&\  u_{2}(t,x)=f(t,x), \ \ (t,x)\in \Sigma
 	\end{aligned}
 	\end{align}
 	with $\Ac^{(1)}$ and  $\Ac^{(2)}$ gauge equivalent. In addition if $\Lambda_{i}$ for $i=1,2$ are the boundary operators  associated with $u_{i}$  and are defined by 
 	\[
 	\Lambda_{i}(\vp,\psi,f):=\lb u_{i}|_{t=T},\PD_{\nu}u_{i}|_{\Sigma}\rb\ \ \text{for} \ i=1,2\]
 	then
 	\[
 	\Lambda_{1}=\Lambda_{2}.
 	\]
 	\begin{proof}
 		It is straightforward computation to verify that $u_{2}= e^{\Phi}u_{1}$ satisfies \eqref{equation of interest with A=A2 and q=q2}.
 		We also have
 		\begin{align*}
 		\begin{aligned}
 		&u_{2}(T,x)=e^{\Phi(T,x)}u_{1}(T,x)=u_{1}(T,x),\ \ x\in\Omega\\ &\PD_{\nu}u_{2}(t,x)=e^{\Phi(t,x)}\lb \PD_{\nu}u_{1}(t,x)+u_{1}(t,x)\PD_{\nu}\Phi(t,x)\rb= \PD_{\nu}u_{1}(t,x); \  \ (t,x)\in\Sigma
 		\end{aligned}
 		\end{align*}
 		where in the above equation, we have used the fact that $\Phi\in C_{c}^{\infty}(Q)$. We thus have $\Lambda_{1}=\Lambda_{2}$. 
 	\end{proof}
 \end{proposition}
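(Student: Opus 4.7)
The proof reduces to a single gauge conjugation identity for the covariant derivative operators $\PD_j+A_j$. The plan is first to verify that the multiplication operator $e^{\Phi}$ intertwines the $\Ac^{(2)}$-operator with the $\Ac^{(1)}$-operator, then check that the initial and lateral Dirichlet data are unchanged, and finally read off the equality $\Lambda_1=\Lambda_2$ from the support condition on $\Phi$.

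For the PDE, I would observe that since $\Ac^{(1)}-\Ac^{(2)}=\n_{t,x}\Phi$, we have $A_j^{(2)}=A_j^{(1)}-\PD_j\Phi$ for each $j\in\{0,1,\ldots,n\}$ (with $\PD_0=\PD_t$). A direct computation gives the pointwise conjugation identity
\[
e^{-\Phi}\lb \PD_j+A_j^{(2)}\rb\lb e^{\Phi}v\rb =\lb \PD_j+A_j^{(2)}+\PD_j\Phi\rb v=\lb \PD_j+A_j^{(1)}\rb v
\]
for any smooth $v$. Iterating yields $e^{-\Phi}\lb \PD_j+A_j^{(2)}\rb^{2}e^{\Phi}v=\lb \PD_j+A_j^{(1)}\rb^{2}v$. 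Applying this with $v=u_1$, summing over $j$ with the correct signs, and adding the zeroth-order term $q_1$ (which is unaffected by the conjugation) gives
\[
\Lc_{\Ac^{(2)},q_1}\lb e^{\Phi}u_1\rb=e^{\Phi}\,\Lc_{\Ac^{(1)},q_1}u_1=0,
\]
so $u_2=e^{\Phi}u_1$ satisfies the PDE in \eqref{equation of interest with A=A2 and q=q2}.

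For the Cauchy and Dirichlet data, I would use the crucial hypothesis that $\Phi\in C_{c}^{\infty}(Q)$: then $\Phi$ and all its derivatives vanish identically in a neighborhood of $\PD Q$, in particular on $\{t=0\}\times\Omega$, on $\Sigma$, and on $\{t=T\}\times\Omega$. Consequently $u_2(0,x)=e^{\Phi(0,x)}u_1(0,x)=\vp(x)$, $\PD_t u_2(0,x)=e^{\Phi(0,x)}\lb \PD_t u_1+u_1\PD_t\Phi\rb(0,x)=\psi(x)$, and $u_2|_{\Sigma}=u_1|_{\Sigma}=f$, so $u_2$ solves the full IBVP \eqref{equation of interest with A=A2 and q=q2}. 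The same vanishing on $\PD Q$ yields $u_2(T,x)=u_1(T,x)$ and $\PD_\nu u_2|_{\Sigma}=\PD_\nu u_1|_{\Sigma}$, giving $\Lambda_1=\Lambda_2$.

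There is no real analytical obstacle here: the proposition is essentially a verification of the gauge covariance of the relativistic Schr\"odinger operator under the transformation $u\mapsto e^{\Phi}u$, and the role of the compact support assumption on $\Phi$ is precisely to make the boundary and initial traces invariant. The only place to be a little careful is tracking the sign convention in $A_j^{(2)}=A_j^{(1)}-\PD_j\Phi$, which is exactly what makes the conjugation identity for $\PD_j+A_j$ work.
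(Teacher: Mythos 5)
Your proof is correct and follows essentially the same approach as the paper: a direct verification of gauge covariance of $\Lc_{\Ac,q}$ under $u\mapsto e^{\Phi}u$, followed by using $\Phi\in C_c^{\infty}(Q)$ to see the traces are unchanged. Your organization via the first-order intertwining identity $e^{-\Phi}(\PD_j+A_j^{(2)})e^{\Phi}=\PD_j+A_j^{(1)}$ and then squaring is a slightly cleaner packaging of the same computation the paper carries out by expanding $(\PD_j+A_j^{(2)})^{2}(e^{\Phi}u_1)$ directly.
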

 \color{black}
 \subsection{Statement of the main result}\label{Main result}
 We introduce some notation. Following \cite{Bukhgeim_Uhlmann_Calderon_problem_partial_Cauchy_data_2002},  
 fix an $\omega_{0}\in\mathbb{S}^{n-1}$, and define the $\omega_{0}$-shadowed and $\omega_{0}$-illuminated faces by 
 \begin{align*}
 \partial\Omega_{+,\omega_{0}}:=\left\{x\in\partial\Omega:\ \nu(x)\cdot\omega_{0}\geq 0 \right\},\ \ \partial\Omega_{-,\omega_{0}}:=\left\{x\in\partial\Omega:\ \nu(x)\cdot\omega_{0}\leq 0 \right\}
 \end{align*}
 of $\partial\Omega$, where $\nu(x)$ is outward unit normal to $\partial\Omega$ at $x\in\partial\Omega$. Corresponding to $\partial\Omega_{\pm,\omega_{0}}$, we denote the lateral boundary parts by 
 $\Sigma_{\pm,\omega_{0}}:=(0,T)\times\partial\Omega_{\pm,\omega_{0}}$. We denote by $F=(0,T)\times F'$ and $G=(0,T)\times G'$ where $F'$ and $G'$ are small enough open neighbourhoods of $\partial\Omega_{+,\omega_{0}}$ and  $\partial\Omega_{-,\omega_{0}}$ respectively in $\partial\Omega$. 
 
 Consider the IBVP
 
 \begin{align}\label{Equation for u}
 \begin{cases}
 &\Lc_{\mathcal{A},q}u(t,x)=0;\ (t,x)\in Q\\
 &u(0,x)=\phi(x),\ \partial_{t}u(0,x)=\psi(x); \ x\in\Omega\\
 &u(t,x)=f(t,x),\ (t,x)\in \Sigma.
 \end{cases}
 \end{align} 
 
 For $\phi\in H^{1}(\Omega), \psi\in L^{2}(\Omega)$ and $f\in H^{1}(\Sigma)$, \eqref{Equation for u} has a unique solution $u\in C^{1}([0,T];L^{2}(\Omega))\cap C([0,T];H^{1}(\Omega))$  and furthermore  $\partial_{\nu}u\in L^{2}(\Sigma)$; see \cite{Katchalov_Kurylev_Lassas_Book_2001,Lasiecka_Lions_Triggiani_Nonhomogeneous_BVP_hyperbolic_1986}. Thus we have $u\in H^{1}(Q)$. Therefore we can define our input-output operator 
 $\Lambda_{\Ac,q}$ by 
 \begin{align}\label{Definition of input-output operator}
 \Lambda_{\Ac,q}(\phi,\psi,f)=(\partial_{\nu}u|_{G},u|_{t=T})
 \end{align} 
 where $u$ is the solution to \eqref{Equation for u}. The operator 
 \[\Lambda_{\Ac,q}: H^{1}(\Omega)\times L^{2}(\Omega)\times H^{1}(\Sigma)\ \rightarrow \ L^{2}(G) \times H^{1}(\Omega)\] 
 \color{black} is a continuous linear map which follows from the well-posedness of the IBVP given by Equation \eqref{Equation for u} (see \cite{Katchalov_Kurylev_Lassas_Book_2001,Lasiecka_Lions_Triggiani_Nonhomogeneous_BVP_hyperbolic_1986}). A natural question is whether this input-output operator uniquely determines the time-dependent perturbations $\Ac$ and $q$. We now state our main result.
 \begin{theorem}\label{Main Theorem}
 	Let $\left(\mathcal{A}^{(1)},q_{1}\right)$ and $\left(\mathcal{A}^{(2)},q_{2}\right)$ be two sets of vector and scalar potentials 
 	such that each $A_{j}^{(i)}\in C^{\infty}_{c}(Q)$ and $q_{i}\in L^{\infty}(Q)$ for $i=1,2$ and $0\leq j\leq n$. Let $u_{i}$ be solutions to \eqref{Equation for u} when $(\Ac,q)=(\Ac^{(i)},q_{i})$ and  $\Lambda_{\Ac^{(i)},q_{i}}$ for $i=1,2$ be the input-output operators  defined by \eqref{Definition of input-output operator} corresponding to $u_{i}$. If
 	\[\Lambda_{\Ac^{(1)},q_{1}}(\phi,\psi,f)=\Lambda_{\Ac^{(2)},q_{2}}(\phi,\psi,f),\ \text{for all} \ (\phi,\psi,f)\in H^{1}(\Omega)\times L^{2}(\Omega)\times H^{1}(\Sigma),\]
 	then
 	there exists a function $\Phi\in C_{c}^{\infty}(Q)$ such that 
 	\begin{align*}
 	(\mathcal{A}^{(1)}-\mathcal{A}^{(2)})(t,x)=\nabla_{t,x}\Phi(t,x) \text{ and }\ q_{1}(t,x)=q_{2}(t,x),\ (t,x)\in Q.
 	\end{align*}
 	
 \end{theorem}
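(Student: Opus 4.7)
The plan is to follow the Carleman-estimate/geometric-optics framework that has become standard for inverse coefficient problems with partial boundary data, adapted here to the hyperbolic setting with a full space-time vector potential. First I would derive a key integral identity: given solutions $u_{1},u_{2}$ of \eqref{Equation for u} for the two coefficient pairs sharing the same Cauchy and lateral Dirichlet data, the difference $w:=u_{1}-u_{2}$ has vanishing Cauchy data and vanishing trace on $\Sigma$, and the hypothesis $\Lambda_{\Ac^{(1)},q_{1}}=\Lambda_{\Ac^{(2)},q_{2}}$ additionally forces $w|_{t=T}=0$ and $\PD_{\nu}w|_{G}=0$. Pairing the equation satisfied by $w$ with a solution $v$ of the adjoint problem $\Lc^{*}_{\Ac^{(2)},q_{2}}v=0$ and integrating by parts over $Q$, all boundary contributions drop except one supported on $\Sigma\setminus G$, yielding an identity of the schematic form
\[
\int_{Q}\bigl[2(\Ac^{(1)}-\Ac^{(2)})\cdot\nabla_{t,x}u_{1}+(\text{lower order})\bigr]\overline{v}\,dt\,dx\;=\;\int_{\Sigma\setminus G}\PD_{\nu}w\,\overline{v}\,dS,
\]
where the inner product on the left is the Minkowskian one coming from the principal symbol of $\Lc_{\Ac,q}$.

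Second, I would construct geometric optics (GO) solutions with a linear light-like phase. Fix $\omega_{0}\in\Sb^{n-1}$ and a transverse frequency $\eta\in\omega_{0}^{\perp}$. Seek $u_{1}$ of the form $e^{-\tau\psi+i\tau\vp}(a_{1}+r_{1,\tau})$ and $v$ of the form $e^{\tau\psi+i\tau\vp}(a_{2}+r_{2,\tau})$ with $\psi(t,x)=\omega_{0}\cdot x-t$ a light-like weight, so that after conjugation the principal symbol becomes a transport operator along the null lines $\{(t,x+t\omega_{0})\}$. The amplitudes $a_{j}$ solve first-order transport equations $(\PD_{t}+\omega_{0}\cdot\nabla)a_{j}+(A_{0}^{(i)}\mp\omega_{0}\cdot A^{(i)})a_{j}=0$, solvable explicitly by integration along the characteristic lines and free to carry any prescribed smooth profile oscillating in $\eta$. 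The remainders $r_{j,\tau}=O(\tau^{-1})$ are produced by a Carleman estimate whose weight $e^{\pm\tau\psi}$ is engineered so that boundary contributions on the portion of $\PD Q$ where no data is available have the favorable sign and can be absorbed; this is exactly the content of \S\ref{Carleman Estimate}.

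Third, I would insert these GO solutions into the integral identity, divide by $\tau$, and let $\tau\to\infty$. The boundary term on $\Sigma\setminus G$ is forced to zero because $e^{-\tau\psi}\to 0$ uniformly there, while the bulk integral reduces at leading order to the light-ray transform
\[
\int_{\Rb}\bigl[(A_{0}^{(1)}-A_{0}^{(2)})-\omega_{0}\cdot(A^{(1)}-A^{(2)})\bigr](t,x+t\omega_{0})\,e^{-i\eta\cdot(x+t\omega_{0})}\,dt\;=\;0,
\]
valid for a rich set of $(\omega_{0},\eta)$. A Fourier-slice argument, exploiting the compact support of $\Ac^{(i)}$ in $Q$ together with $T>\mbox{diam}(\Omega)$, then shows that the vanishing of this transform along all such light rays forces $\Ac^{(1)}-\Ac^{(2)}$ to be exact, i.e. $\nabla_{t,x}\Phi$ with $\Phi\in C_{c}^{\infty}(Q)$. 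Invoking Proposition \ref{Gauge invariance}, one reduces to the case $\Ac^{(1)}=\Ac^{(2)}$, in which the identity involves only $q_{1}-q_{2}$; an analogous GO computation at the next order of the $\tau$-expansion, combined with injectivity of the light-ray transform on compactly supported scalar functions, concludes $q_{1}=q_{2}$.

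The principal obstacle is the Carleman estimate itself: one needs a weighted energy inequality for $\Lc_{\Ac,q}$ with the linear weight $e^{\tau(\omega_{0}\cdot x-t)}$ that controls both time and space traces on $\PD Q$ with sign conditions leaving the unobserved face of the boundary harmless, and, dually, permits the construction of GO remainders with prescribed vanishing traces on $G$ and at $t=T$. A secondary but delicate point is the passage from the vanishing of the light-ray transform of the one-form $\Ac^{(1)}-\Ac^{(2)}$ to its exactness modulo a gauge; here the hypothesis $T>\mbox{diam}(\Omega)$ is essential, as it ensures that the family of light rays accessible by the GO construction is rich enough to cover all of $Q$ and to activate the required Fourier-slice inversion.
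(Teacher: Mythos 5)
Your high-level plan matches the paper's: derive an integral identity from $\Lambda_{\Ac^{(1)},q_1}=\Lambda_{\Ac^{(2)},q_2}$, build geometric optics solutions with a light-like linear Carleman weight whose amplitudes solve transport equations, pass to the limit in the Carleman parameter to extract a light-ray transform, and invert that transform by Fourier/Paley--Wiener arguments; the potential is then handled after gauging away the vector field difference. However, three points in your outline are either wrong as stated or conceal the real difficulties.

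First, the integral identity you write down is incomplete. The data equality gives $w|_{t=T}=0$ but tells you nothing about $\partial_t w|_{t=T}$, so integration by parts over $Q$ leaves, in addition to the term on $\Sigma\setminus G$, the boundary contribution
$\int_\Omega \partial_t w(T,x)\,\overline{v(T,x)}\,\mathrm dx$.
This is not negligible a priori; the paper's boundary Carleman estimate (Theorem \ref{Boundary Carleman estimate}) is built specifically to control it, carrying $h\,\lVert e^{-\varphi(T,\cdot)/h}\partial_t u(T,\cdot)\rVert_{L^2(\Omega)}^2$ on the left-hand side. Your claim that ``all boundary contributions drop except one supported on $\Sigma\setminus G$'' is false and has to be repaired by exactly this argument.

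Second, the GO ansatz $e^{-\tau\psi+i\tau\varphi}(a+r_\tau)$ with an $O(\tau)$-scaled oscillating phase is not what the paper does and is problematic. If the $i\tau\varphi$-phase cancels between $u_1$ and $\overline v$ you have lost the Fourier parameter; if it does not, you need $\varphi$ to be a second null function with $d\varphi\perp d\psi$, which is impossible in Lorentzian signature. The paper keeps a purely real weight $e^{\pm\varphi/h}$ and places the oscillation $e^{-i\zeta\cdot(t,x)}$ at a \emph{fixed} frequency $\zeta\in(1,-\omega)^\perp$ inside the amplitude $B_g$; this is what produces the Fourier transform on the hyperplane $(1,-\omega)^\perp$ after $h\to 0$.

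Third, passing from the vanishing of the light-ray transform of the $1$-form $\Ac^{(1)}-\Ac^{(2)}$ to its exactness is not a routine Fourier-slice argument but a genuine support theorem (the paper's Lemma \ref{support theorem}). One has to differentiate the ray transform in $(t,x)$, subtract the fundamental-theorem-of-calculus identity to obtain the transverse ray transform of the $2$-form $h=dF$, then choose enough directions $\omega$ near $\omega_0$ to show $\widehat h_{ij}(\zeta)=0$ for all space-like $\zeta$ near $(1,\omega_0)^\perp$, and finally invoke Paley--Wiener. This uses $n\geq3$ in an essential way. The hypothesis $T>\mathrm{diam}(\Omega)$ is \emph{not} what drives the Fourier inversion; it is needed earlier so that the null lines through $Q$ exit $Q$ in finite time and the transport amplitudes $\exp(\int_0^\infty(1,-\omega)\cdot\Ac(t+s,x-s\omega)\,\mathrm ds)$ are well defined and compactly supported. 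You should also note that, after dividing by $h$ and letting $h\to 0$, what you first obtain is $\exp\left(\int_{\mathbb R}(1,-\omega)\cdot\Ac\,\mathrm ds\right)=1$, from which the vanishing of the ray transform must then be deduced; this small extra step is not visible in your sketch.

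Finally, for the recovery of $q$, the paper does not go to the ``next order of the $\tau$-expansion''; it uses Proposition \ref{Gauge invariance} to replace $\Ac^{(1)}$ by $\Ac^{(1)}+\nabla_{t,x}\Phi=\Ac^{(2)}$ and reruns the same first-order limit, which then isolates the Fourier transform of $q_1-q_2$ on the cone of admissible $\xi$. Your alternative would need a justification that the $O(h)$ error terms are genuinely computable, which is avoidable.
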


 \begin{remark}
 	We have stated the above result for vector potentials in $C_{c}^{\infty}(Q)$ for simplicity. It is straightforward to adapt these techniques to prove for the case $\Ac\in W^{2,\infty}(Q)$ that are identical on the boundary.  Using suitable modification of techniques from \cite{Kian_Damping_partial_data_2016}, we believe it can also be proved for $\Ac\in W^{1,\infty}(Q)$ provided they are identical on the boundary, although we have not pursued it in this work.
 \end{remark}
 
 \color{black}
 \section{Carleman Estimate}\label{Carleman Estimate}

 We denote by $H_{\mathrm{scl}}^{1}(Q)$, the semiclassical Sobolev space of order $1$ on $Q$ with the following norm
 \[
 \left|\left|u\right|\right|_{H^{1}_{\mathrm{scl}}(Q)}=\left|\left|u\right|\right|_{L^{2}(Q)}+\left|\left|h\nabla_{t,x}u\right|\right|_{L^{2}(Q)},
 \]
 and for $Q=\mathbb{R}^{1+n}$ we denote by $H^{s}_{\mathrm{scl}}(\mathbb{R}^{1+n})$, the Sobolev space of order $s$ with the norm given by 
 \[
 \left|\left|u\right|\right|_{H^{s}_{\mathrm{scl}}(\rr^{1+n})}^{2}=\left|\left|\left<hD\right>^{s}u\right|\right|_{L^{2}(\mathbb{R}^{1+n})}^{2}=\int\limits_{\mathbb{R}^{1+n}}(1+h^{2}\tau^{2}+h^{2}|\xi|^{2})^{s}\left|\widehat{u}(\tau,\xi)\right|^{2}\D \tau \D \xi.
 \]
 In this section, we derive a Carleman estimate involving boundary terms for \eqref{definition of operator} conjugated with a linear weight. We use this estimate to control boundary terms over subsets of the boundary where measurements are not available. Our proof follows from modifications of the Carleman estimate given in \cite{Kian_Damping_partial_data_2016}. Since we work in a semiclassical setting, we prefer to give the proof for the sake of completeness.
 
 \begin{theorem}\label{Boundary Carleman estimate}
 	Let $\vp(t,x):=t+x\cdot\omega$, where $\omega\in \Sb^{n-1}$ is fixed. Assume that $A_{j}\in C^{\infty}_{c}(Q)$ for $0\leq j\leq n$ and $q\in L^{\infty}(Q)$. Then the  Carleman estimate 
 	\begin{align}\label{Boundary carleman estimate}
 	\notag &h\left(  e^{-\vp/h}\partial_{\nu}\vp\partial_{\nu}u,e^{-\vp/h}\partial_{\nu}u\right)_{L^{2}(\Sigma_{+,\omega})}+h\left( e^{-\frac{\vp(T,\cdot)}{h}}\partial_{t}u(T,\cdot),e^{-\frac{\vp(T,\cdot)}{h}}\partial_{t}u(T,\cdot)\right)_{L^{2}(\Omega)}\\
 	\notag & + \lVert e^{-\vp/h}u\rVert^{2}_{L^{2}(Q)}+\lVert he^{-\vp/h}\partial_{t}u\rVert^{2}_{L^{2}(Q)}+\lVert he^{-\vp/h}\nabla_{x}u\rVert^{2}_{L^{2}(Q)}\\
 	&\leq C\Big( \lVert h e^{-\vp/h}\mathcal{L}_{\mathcal{A},q}u\rVert ^{2}_{L^{2}(Q)}+\left( e^{-\frac{\vp(T,\cdot)}{h}}u(T,\cdot),e^{-\frac{\vp(T,\cdot)}{h}}u(T,\cdot)\right)_{L^{2}(\Omega)}
 	\\
 	\notag & +h\left( e^{-\frac{\vp(T,\cdot)}{h}}\nabla_{x}u(T,\cdot),e^{-\frac{\vp(T,\cdot)}{h}}\nabla_{x}u(T,\cdot)\right)_{L^{2}(\Omega)}+h\left(  e^{-\vp/h}\lb-\partial_{\nu}\vp\rb\partial_{\nu}u,e^{-\vp/h}\partial_{\nu}u\right)_{L^{2}\lb\Sigma_{-,\omega}\rb}\Big)
 	\end{align}
 	holds for all $u\in C^{2}(Q)$ with
 	\begin{align*}
 	u|_{\Sigma}=0,\ u|_{t=0}=\partial_{t}u|_{t=0}=0,
 	\end{align*} 
 	and $h$ small enough.
 \end{theorem}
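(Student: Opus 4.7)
The strategy is the standard one for Carleman estimates with a null phase: conjugate the operator with the weight $e^{-\varphi/h}$, work with the principal part of the conjugated operator, and recover the full perturbation by absorption. Set $v:=e^{-\varphi/h}u$. Using $\partial_{t}\varphi=1$, $\nabla_{x}\varphi=\omega$ and the crucial null identity $|\partial_{t}\varphi|^{2}=|\nabla_{x}\varphi|^{2}$, a direct computation gives
\[
h^{2}e^{-\varphi/h}\Lc_{\Ac,q}u \;=\; P_{\varphi}v+h\,Rv,\qquad P_{\varphi}:=h^{2}\Box+2hL_{-},\ \ L_{-}:=\partial_{t}-\omega\cdot\nabla_{x},
\]
where the null-phase property is what kills the $O(1/h)$ zeroth-order contribution of the conjugation, leaving only the first-order drift $2hL_{-}$. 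The remainder $R$ is a first-order differential operator with bounded coefficients depending on $\Ac$ and $q$ satisfying $\|hRv\|_{L^{2}(Q)}^{2}\lesssim \|v\|_{L^{2}(Q)}^{2}+\|h\nabla_{t,x}v\|_{L^{2}(Q)}^{2}$. For $h$ sufficiently small the contribution of $hRv$ can be absorbed into the left-hand side, so I would reduce to proving the estimate for the principal operator $P_{\varphi}$.

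The key algebraic identity comes from expanding $\|P_{\varphi}v\|_{L^{2}(Q)}^{2}$. Since $h^{2}\Box$ and $2hL_{-}$ have constant coefficients, they commute, and so the cross term in $\|P_{\varphi}v\|^{2}$ is a pure boundary contribution:
\[
\|P_{\varphi}v\|^{2}_{L^{2}(Q)} \;=\; h^{4}\|\Box v\|^{2}+4h^{2}\|L_{-}v\|^{2}+4h^{3}(\Box v,L_{-}v)_{L^{2}(Q)}.
\]
Using the vanishing conditions $v|_{t=0}=\partial_{t}v|_{t=0}=0$, $v|_{\Sigma}=0$ (which imply $\partial_{t}v|_{\Sigma}=0$ and $\nabla v|_{\Sigma}=(\partial_{\nu}v)\nu$), an explicit integration by parts gives
\[
2(\Box v,L_{-}v)_{L^{2}(Q)} \;=\; \|L_{-}v(T,\cdot)\|_{L^{2}(\Omega)}^{2}+\|\nabla_{\perp}v(T,\cdot)\|_{L^{2}(\Omega)}^{2}+\int_{\Sigma}(\omega\cdot\nu)|\partial_{\nu}v|^{2}\,d\sigma,
\]
where $\nabla_{\perp}v:=\nabla v-(\omega\cdot\nabla v)\omega$. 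Splitting the boundary integral according to the sign of $\omega\cdot\nu$ and moving the $\Sigma_{-,\omega}$-part to the right produces the fundamental inequality
\[
h^{4}\|\Box v\|^{2}+4h^{2}\|L_{-}v\|^{2}+2h^{3}\bigl(\|L_{-}v(T)\|^{2}+\|\nabla_{\perp}v(T)\|^{2}+E_{+}\bigr) \;\le\; \|P_{\varphi}v\|^{2}+2h^{3}E_{-},
\]
where $E_{\pm}:=\int_{\Sigma_{\pm,\omega}}(\pm\omega\cdot\nu)|\partial_{\nu}v|^{2}\ge 0$.

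The remaining interior terms are recovered by two complementary arguments. For $\|h\partial_{t}v\|^{2}+\|h\nabla v\|^{2}$, apply the standard energy inequality for the wave equation with zero Cauchy data and zero Dirichlet data, $\|\partial_{t}v\|_{L^{2}(Q)}^{2}+\|\nabla v\|_{L^{2}(Q)}^{2}\lesssim T^{2}\|\Box v\|_{L^{2}(Q)}^{2}$, and combine it with $h^{4}\|\Box v\|^{2}\lesssim \|P_{\varphi}v\|^{2}+h^{3}E_{-}$ from the previous step. For $\|v\|_{L^{2}(Q)}$, integrate $L_{-}v$ along the characteristic lines $s\mapsto(t-s,x+s\omega)$ of the vector field $(1,-\omega)$; since $v$ vanishes on the inflow boundary $\{t=0\}\cup\Sigma_{+,\omega}$, one obtains the Poincar\'e-transport inequality $\|v\|_{L^{2}(Q)}\le C\|L_{-}v\|_{L^{2}(Q)}$ with $C$ depending on $T$ and $\mathrm{diam}(\Omega)$, and hence $\|v\|^{2}\lesssim \|P_{\varphi}v\|^{2}/h^{2}+hE_{-}$. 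Finally, the $t=T$ trace $h\|\partial_{t}v(T)\|^{2}$ appearing on the left is handled by writing $\partial_{t}v(T)=L_{-}v(T)+\omega\cdot\nabla v(T)$: the first summand is controlled by the fundamental inequality and the second by $h\|\nabla v(T)\|^{2}$, a quantity allowed on the right-hand side of the statement. Converting the $v$-estimates back to $u$-estimates with the $e^{-\varphi/h}$ weight, and absorbing $\|hRv\|^{2}$ for $h$ small enough, completes the proof.

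The main obstacle is the bookkeeping of boundary terms: verifying that every contribution ends up on the correct side of the inequality with the correct power of $h$, and in particular that the $\Sigma_{+,\omega}$ boundary integral together with the partial-time-derivative trace at $t=T$ assemble into the left-hand side of the statement, while $\Sigma_{-,\omega}$ and the Dirichlet and spatial-gradient traces at $t=T$ appear only on the right. The essential structural feature that makes the algebra cooperate is the vanishing of $[h^{2}\Box,2hL_{-}]$, a consequence of the linearity of $\varphi$: it turns the cross term in $\|P_{\varphi}v\|^{2}$ into a clean boundary integral, while the Poincar\'e-transport inequality along characteristics of $L_{-}$ supplies the interior $\|v\|_{L^{2}(Q)}$ bound needed to close the estimate.
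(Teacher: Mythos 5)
Your algebraic identity for the cross term $2(\Box v, L_{-}v)_{L^2(Q)}$ is correct (I verified it gives exactly $\|L_{-}v(T)\|^2+\|\nabla_{\perp}v(T)\|^2+\int_{\Sigma}(\omega\cdot\nu)|\partial_{\nu}v|^2$ under your boundary conditions), and your energy estimate and the transport--Poincar\'e estimate along the characteristics of $L_{-}$ are both valid. The problem is the absorption step. With the pure linear phase, the fundamental inequality bounds $h^4\|\Box v\|^2+h^2\|L_{-}v\|^2$, and after invoking your energy and transport inequalities you obtain interior control of the form $c_0\bigl(h^4(\|\partial_t v\|^2+\|\nabla v\|^2)+h^2\|v\|^2\bigr)\le \|P_{\varphi}v\|^2 + 2h^3E_{-}$, where $c_0$ depends on $T$ and $\mathrm{diam}(\Omega)$ (and is typically small, since the energy estimate carries a $T^2$-loss). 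Meanwhile the remainder you call $hRv$ satisfies, sharply, $\|hRv\|^2\le C_{\Ac,q}h^2\bigl(h^2(\|\partial_t v\|^2+\|\nabla v\|^2)+\|v\|^2\bigr)$, so after dividing by $h^2$ as the target estimate requires, the error has \emph{exactly the same $h$-scaling} as the quantity you are trying to control; absorption then forces $C_{\Ac,q}\lesssim c_0$, i.e.\ a smallness assumption on $\Ac$ and $q$ that the theorem does not make. The claim ``for $h$ sufficiently small the contribution of $hRv$ can be absorbed'' is therefore not justified: there is no gain in $h$ in this comparison. This is precisely the well-known degeneracy of a linear Carleman weight (the H\"ormander bracket condition is only marginal for $\varphi=t+x\cdot\omega$).

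The paper gets around exactly this obstruction by convexifying the weight, $\widetilde{\varphi}=\varphi-\frac{ht^2}{2\varepsilon}$, and computing $\mathrm{Re}(P_1v,P_2v)$ for the resulting $\Box_{\varphi,\varepsilon}$. The extra time-dependence of $\partial_t\widetilde{\varphi}=1-\frac{ht}{\varepsilon}$ produces, after integration by parts, the interior terms $\frac{2h^4}{\varepsilon}\int_{Q}(|\partial_t v|^2+|\nabla_x v|^2)$ and $\frac{ch^2}{\varepsilon}\int_{Q}|v|^2$ on the left, i.e.\ the same $h$-scaling as in your argument but now multiplied by $1/\varepsilon$ with $\varepsilon$ independent of $h$. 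Choosing $\varepsilon$ small relative to $C_{\Ac,q}$ (and then $h$ small) makes the absorption work for arbitrary bounded $\Ac,q$. So the structure you have built -- the principal operator $P_\varphi=h^2\Box+2hL_-$, the exact cross-term boundary identity, and the transport inequality for the $\|v\|^2$ term -- is all usable, but you need to replace the pure linear weight by a mildly pseudoconvex one so that the interior terms on the left acquire an extra parameter to beat the perturbation constant; without that ingredient the proof does not close.
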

 \begin{proof}
 	To prove the estimate \eqref{Boundary carleman estimate}, we will use a convexification argument used in \cite{Kian_Damping_partial_data_2016}. Consider the following perturbed weight function
 	\begin{align}\label{value of convexified Carleman weight}
 	\wt{\vp}(t,x)=\vp(t,x)-\frac{ht^{2}}{2\ve}.
 	\end{align}
 	We first consider the conjugated operator
 	\begin{equation}\label{Conjugated Box}
 	\Box_{\vp,\ve}:=h^{2}e^{-\wt{\vp}/h}\Box e^{\wt{\vp}/h}.
 	\end{equation}
 	For $v\in C^{2}(Q)$ satisfying $v|_{\Sigma}=v|_{t=0}=\PD_{t}v|_{t=0}=0$, consider the $L^{2}$ norm of $\Box_{\vp,\ve}$:
 	\begin{align*}\label{L2 norm of conjugated operator}
 	\int\limits_{Q}\left| \Box_{\vp,\ve}v(t,x)\right|^{2}\D x \D t.
 	\end{align*}
 	Expanding \eqref{Conjugated Box}, we get, 
 	\[
 	\Box_{\vp,\ve}v(t,x)=\Big{(} h^{2}\Box  +h \Box \wt{\vp}+\lb \left|\PD_{t} \wt{\vp}\right|^{2}-\left|\n_{x}\wt{\vp}\right|^{2}\rb+2h\lb \PD_{t}\wt{\vp} \PD_{t}-\n_{x}\wt{\vp}\cdot \n_{x}\rb\Big{)} v(t,x).
 	\]
 	We write this as 
 	\[
 	\Box_{\vp,\ve}v(t,x)=P_{1}v(t,x)+P_{2}v(t,x),
 	\]
 	where 
 	\begin{align*}
 	P_{1}v(t,x)& =\Big{(}h^{2}\Box  +h \Box \wt{\vp}+\lb \left|\PD_{t} \wt{\vp}\right|^{2}-\left|\n_{x}\wt{\vp}\right|^{2}\rb\Big{)} v(t,x)\\
 	&=\lb h^{2}\Box + \frac{h^2t^2}{\ve^2}-\frac{2ht}{\ve}-\frac{h^2}{\ve}\rb v(t,x),
 	\end{align*}
 	and 
 	\begin{align*}
 	P_{2}v(t,x)&=2h\lb \PD_{t}\wt{\vp} \PD_{t}-\n_{x}\wt{\vp}\cdot \n_{x}\rb v(t,x)\\
 	&=2h \lb \lb 1-\frac{ht}{\ve}\rb \PD_{t}-\omega\cdot \n_{x}\rb v(t,x).
 	\end{align*}
 	Now 
 	\begin{align*}
 	\int\limits_{Q}&\left|\Box_{\vp,\ve}v(t,x)\right|^{2}\D x \D t\geq 2\int\limits_{Q} \mbox{Re}\lb P_{1}v(t,x)\overline{P_{2}v(t,x)} \rb \D x \D t\\
 	&=4h^{3}\int\limits_{Q}\mbox{Re}\lb \Box v(t,x)\left(1-\frac{ht}{\ve}\right)\overline{\partial_{t}v(t,x)}\rb \D x \D t
 	-4h^{3}\int\limits_{Q}\mbox{Re}\lb \Box v(t,x)\omega\cdot\overline{\nabla_{x}v(t,x)}\rb \D x \D t\\
 	&\quad\quad +4h\int\limits_{Q}\mbox{Re}\lb \left(\frac{h^{2}t^{2}}{\ve^{2}}
 	-\frac{2ht}{\ve}-\frac{h^{2}}{\ve}\right)v(t,x)\left(1-\frac{ht}{\ve}\right)\overline{\partial_{t}v(t,x)}\rb \D x\D t\\
 	&\quad\quad -4h\int\limits_{Q}\mbox{Re}\lb \left(\frac{h^{2}t^{2}}{\ve^{2}}-\frac{2ht}{\ve}-\frac{h^{2}}{\ve}\right)v(t,x)\omega\cdot\overline{\nabla_{x}v(t,x)}\rb \D x\D t\\
 	&:=I_{1}+I_{2}+I_{3}+I_{4}.
 	\end{align*}
 	We first simplify $I_{1}$. We have 
 	\begin{align*}
 	I_{1}=2h^{3}\lb 1-\frac{hT}{\ve}\rb \int\limits_{\Omega} \lb |\PD_{t}v(T,x)|^{2} +|\n_{x}v(T,x)|^{2} \rb \D x +\frac{2h^{4}}{\ve}\int\limits_{Q} \lb |\PD_{t}v(t,x)|^{2}+|\n_{x}v(t,x)|^{2}\rb \D x \D t.
 	\end{align*}
 	In the above derivation, we used integration by parts combined with the hypotheses that $v|_{\Sigma}=v|_{t=0}=\PD_{t}v|_{t=0}=0$. Note that $v|_{\Sigma}=0$ would imply that $\PD_{t}v=0$ on $\Sigma$.
 	
 	Next we consider $I_{2}$. We have 
 	\begin{align*}
 	I_{2}&=-4h^{3} \int\limits_{Q}\mbox{Re}\lb \Box  v(t,x) \omega\cdot \overline{\n_{x}v(t,x)}\rb \D x \D t\\
 	&=-4h^{3}\mbox{Re}\int\limits_{\Omega} \PD_{t} v(T,x) \overline{\omega\cdot \n_{x}v(T,x)} \D x +2h^{3} \int\limits_{\Sigma}\omega\cdot \nu |\PD_{t}v(t,x)|^{2} \D S_{x} \D t +2h^{3}\int\limits_{\Sigma} \omega\cdot \nu |\PD_{\nu}v|^{2} \D S_{x} \D t.
 	\end{align*}
 	In deriving the above equation, we used the fact that 
 	\[
 	2h^{3}\mbox{Re} \int\limits_{\Sigma} \PD_{\nu} v(t,x) \overline{\omega\cdot \n_{x}v(t,x)} \D S_{x} \D t= 2h^{3}\int\limits_{\Sigma} \omega\cdot \nu |\PD_{\nu} v|^{2} \D S_{x} \D t.
 	\]

 	Next we consider $I_{3}$. We have 
 	\begin{align*}
 	I_{3}& =4h\int\limits_{Q}\mbox{Re}\lb \left(\frac{h^{2}t^{2}}{\ve^{2}}
 	-\frac{2ht}{\ve}-\frac{h^{2}}{\ve}\right)v(t,x)\left(1-\frac{ht}{\ve}\right)\overline{\partial_{t}v(t,x)}\rb \D x\D t\\
 	&= 2\int\limits_{\Omega} \left(\frac{h^{3}T^{2}}{\ve^{2}}
 	-\frac{2h^{2}T}{\ve}-\frac{h^{3}}{\ve}\right)\lb 1-\frac{hT}{\ve}\rb |v(T,x)|^{2} \D x\\
 	&\quad \quad -2\int\limits_{Q}\left[ \left( \frac{2h^{3}t}{\ve^{2}}-\frac{2h^{2}}{\ve}\right)\left ( 1-\frac{ht}{\ve}\right)-\frac{h^{2}}{\ve}\left( \frac{h^{2}t^{2}}{\ve^{2}}-\frac{2ht}{\ve}-\frac{h^{2}}{\ve}\right)\right]|v(t,x)|^{2}\D x \D t.
 	\end{align*}
 	Finally, since $v=0$ on $\Sigma$, $I_{4}=0$.
 	
 	Therefore,
 	choosing $\ve$ and $h$ small enough, we have
 	\begin{align}\label{Estimate for P1P2 boundary}
 	\notag \int\limits_{Q} |\Box_{\vp,\ve}v(t,x)|^{2}\D x \D t &\geq \frac{2h^{4}}{\ve}\lb\int\limits_{Q} |\partial_{t}v(t,x)|^{2}
 	+|\nabla_{x}v(t,x)|^{2}\D x \D t\rb+ch^{3}\int\limits_{\Omega}|\partial_{t}v(T,x)|^{2}\D x\\
 	\notag &+2h^{3}\int\limits_{\Sigma}\omega\cdot\nu(x)|\partial_{\nu}v(t,x)|^{2}\D S_{x}\D t-ch^{3}\int\limits_{\Omega}|\nabla_{x}v(T,x)|^{2}\D x\\
 	&-ch^{2}\int\limits_{\Omega}|v(T,x)|^{2}\D x+\frac{ch^{2}}{\ve}\int\limits_{Q}|v(t,x)|^{2}\D x \D t.
 	\end{align}

 	Now we consider the conjugated operator $\Lc_{\vp, \ve}:=h^{2} e^{-\frac{\wt{\vp}}{h}}\Lc_{\Ac,q} e^{\frac{\wt{\vp}}{h}}$. We have  
 	\[
 	\Lc_{\vp, \ve}v(t,x)=h^{2}\lb e^{-\wt{\vp}/h}\left(\Box+2A_{0}\partial_{t}-2A\cdot\nabla_{x}+\wt{q}\right) e^{\wt{\vp}/h}v(t,x)\rb,
 	\]
 	where 
 	\[
 	\wt{q}= q+|A_{0}|^{2}-|A|^{2}+\PD_{t}A_{0}-\n_{x}\cdot A.
 	\]
 	We write
 	\[
 	\Lc_{\vp, \ve}v(t,x)= \Box_{\vp,\ve}v(t,x)+\wt{P} v(t,x),
 	\]
 	where 
 	\begin{equation}\label{Remainder expression}
 	\wt{P}v(t,x)=h^{2}\lb e^{-\wt{\vp}/h}\left(2A_{0}\partial_{t}-2A\cdot\nabla_{x}+\wt{q}\right) e^{\wt{\vp}/h}v(t,x)\rb.
 	\end{equation}
 	By triangle inequality,
 	\begin{equation}\label{Full conjugated}
 	\int\limits_{Q}\left|\Lc_{\vp, \ve}v(t,x)\right|^{2}\D x \D t\geq \frac{1}{2}\int\limits_{Q}|\Box_{\vp,\ve}v(t,x)|^{2}\D x \D t-\int\limits_{Q}|\wt{P}v(t,x)|^{2}\D x \D t.
 	\end{equation}
 	Choosing $h$ small enough, we have,
 	\begin{align}\label{Perturbation terms estimate}
 	\begin{aligned}
 	\int\limits_{Q}\left|\wt{P}v(t,x)\right|^{2}\D x \D t& \leq 
 	Ch^{4}\lb \lVert A_{0}\rVert^{2}_{L^{\infty}(Q)}\int\limits_{Q}|\partial_{t}v(t,x)|^{2}\D x \D t
 	+\lVert A\rVert^{2}_{L^{\infty}(Q)}\int\limits_{Q}|\nabla_{x}v(t,x)|^{2}\D x \D t \rb \\
 	&\quad+Ch^{2} \lb \lVert A_{0}\rVert^{2}_{L^{\infty}(Q)}+\lVert A\rVert^{2}_{L^{\infty}(Q)} \rb \int\limits_{Q}|v(t,x)|^{2}\D x \D t\\
 	&\quad+ Ch^{4}\lVert \wt{q}\rVert^{2}_{L^{\infty}(Q)}\int\limits_{Q}|v(t,x)|^{2}\D x \D t.
 	\end{aligned}
 	\end{align}
 	
 	Using \eqref{Estimate for P1P2 boundary} and \eqref{Perturbation terms estimate} in \eqref{Full conjugated} and taking $\ve$ small enough, we have that there exists a $C>0$ depending only on $\ve$, $T$, $\Omega$, $\Ac$ and $q$  such that
 	\begin{align}\label{Estimate for conjugated operator with convexified weight}
 	\notag &C\left(\int\limits_{Q}\left( |h\partial_{t}v(t,x)|^{2}
 	+|h\nabla_{x}v(t,x)|^{2}\right)\D x \D t+\int\limits_{Q}|v(t,x)|^{2}\D x \D t\right)\\
 	\notag &+Ch\int\limits_{\Omega}|\partial_{t}v(T,x)|^{2}\D x+h\int\limits_{\Sigma}\omega\cdot\nu(x)|\partial_{\nu}v(t,x)|^{2}\D S_{x}\D t\\
 	&\leq \frac{1}{h^{2}}\int\limits_{Q}\left|\Lc_{\vp,\ve}v(t,x)\right|^{2}\D x\D t
 	+Ch\int\limits_{\Omega}|\nabla_{x}v(T,x)|^{2}\D x
 	+C\int\limits_{\Omega}|v(T,x)|^{2}\D x.
 	\end{align}
 	
 	After substituting $v(t,x)=e^{-\frac{\wt{\vp}}{h}} u(t,x)$, we get
 	\begin{align*}
 	&h\left(  e^{-\vp/h}\partial_{\nu}\vp\partial_{\nu}u,e^{-\vp/h}\partial_{\nu}u\right)_{L^{2}(\Sigma_{+,\omega})}+h\left( e^{-\frac{\vp(T,\cdot)}{h}}\partial_{t}u(T,\cdot),e^{-\frac{\vp(T,\cdot)}{h}}\partial_{t}u(T,\cdot)\right)_{L^{2}(\Omega)}\\
 	& + \lVert e^{-\vp/h}u\rVert^{2}_{L^{2}(Q)}+\lVert he^{-\vp/h}\partial_{t}u\rVert^{2}_{L^{2}(Q)}+\lVert he^{-\vp/h}\nabla_{x}u\rVert^{2}_{L^{2}(Q)}\\
 	&\leq C\Big( \lVert h e^{-\vp/h}\mathcal{L}_{\mathcal{A},q}u\rVert ^{2}_{L^{2}(Q)}+\left( e^{-\frac{\vp(T,\cdot)}{h}}u(T,\cdot),e^{-\frac{\vp(T,\cdot)}{h}}u(T,\cdot)\right)_{L^{2}(\Omega)}
 	\\
 	& +h\left( e^{-\frac{\vp(T,\cdot)}{h}}\nabla_{x}u(T,\cdot),e^{-\frac{\vp(T,\cdot)}{h}}\nabla_{x}u(T,\cdot)\right)_{L^{2}(\Omega)}+h\left(  e^{-\vp/h}\lb-\partial_{\nu}\vp\rb\partial_{\nu}u,e^{-\vp/h}\partial_{\nu}u\right)_{L^{2}\lb\Sigma_{-,\omega}\rb}\Big)
 	\end{align*}
 	This completes the proof.
 \end{proof}
 In particular, it follows from the previous calculations that  for $u\in C_{c}^{\infty}(Q)$,
 \begin{equation}\label{Interior Carleman estimate}
 \lVert u\rVert_{H^{1}_{\mathrm{scl}}(Q)}\leq \frac{C}{h}\lVert \Lc_{\vp}u\rVert_{L^{2}(Q)},
 \end{equation}
 where 
 	\[
 	\Lc_{\vp}:=h^{2} e^{-\frac{\vp}{h}}\Lc_{\Ac,q} e^{\frac{\vp}{h}}.
 	\]
 
 {The formal adjoint $\Lc_{\vp}^{*}$ is of the same form as $\Lc_{\vp}$. More precisely,
 	\[
 	\Lc_{\vp}^{*}:=h^{2} e^{\frac{\vp}{h}}\Lc_{-\Ac,\overline{q}} e^{-\frac{\vp}{h}},
 	\]
 	since we have assumed $\Ac$ is real valued. Also note that \eqref{Interior Carleman estimate} would also hold for the operator $\Lc_{\vp}^{*}$.
 }
 
 To show the existence of suitable solutions to \eqref{definition of operator}, we need to shift the Sobolev index by $-1$ in \eqref{Interior Carleman estimate}. This we do in the next lemma. 
 \begin{lemma}\label{weighted H(-1) Carleman estimate}
 	Let $\vp(t,x)=t+x\cdot \omega$ and $\Lc_{\vp}:= h^{2}e^{-\vp/h}\Lc_{\mathcal{A},q}e^{\vp/h}$. There exists an  $h_{0}>0$ such that 
 	\begin{equation}\label{Carleman estimate in Sobolev space of negative order}
 	\lVert v\rVert_{L^{2}(\mathbb{R}^{1+n})}\leq \frac{C}{h} \lVert \Lc_{\vp}v\rVert_{H_{\mathrm{scl}}^{-1}(\mathbb{R}^{1+n})},
 	\end{equation}
 	and 
 	\begin{equation}\label{Carleman estimate in Sobolev space of negative order adjoint operator}
 	\lVert v\rVert_{L^{2}(\mathbb{R}^{1+n})}\leq \frac{C}{h} \lVert \Lc^{*}_{\vp}v\rVert_{H_{\mathrm{scl}}^{-1}(\mathbb{R}^{1+n})} 
 	\end{equation}
 	for all $v\in C_{c}^{\infty}(Q),\ 0< h\leq h_{0}$.
 \end{lemma}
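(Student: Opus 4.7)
Since $\Lc_{\vp}^{\ast}$ is of the same form as $\Lc_{\vp}$ and satisfies an analogue of \eqref{Interior Carleman estimate}, the two estimates \eqref{Carleman estimate in Sobolev space of negative order} and \eqref{Carleman estimate in Sobolev space of negative order adjoint operator} are proved by the same argument; I focus on \eqref{Carleman estimate in Sobolev space of negative order}. The plan is to upgrade the $L^{2}$-based interior estimate \eqref{Interior Carleman estimate} to an $H^{-1}_{\mathrm{scl}}$-based one by a semiclassical commutator argument with the smoothing operator $\chi := \langle hD\rangle^{-1}$.

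The starting observation is that $\|v\|_{L^{2}(\rr^{1+n})}=\|\chi v\|_{H^{1}_{\mathrm{scl}}(\rr^{1+n})}$ for $v\in C_{c}^{\infty}(Q)$, which makes $\chi v$ the natural candidate to plug into \eqref{Interior Carleman estimate}. Since $\chi v\notin C_{c}^{\infty}(Q)$, I would first introduce a cutoff $\theta\in C_{c}^{\infty}(Q)$ with $\theta\equiv 1$ on a fixed neighbourhood of $\mathrm{supp}(v)$; because the kernel of $\chi$ is a rescaled Bessel kernel decaying like $e^{-|y|/h}$, the tail $(1-\theta)\chi v$ and the auxiliary commutator $[\Lc_{\vp},\theta]\chi v$ are both $O(e^{-c/h})\|v\|_{L^{2}}$ in the relevant norms and may be absorbed at the end. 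Applying \eqref{Interior Carleman estimate} to $\theta\chi v\in C_{c}^{\infty}(Q)$ and using the decomposition
\[
\Lc_{\vp}(\theta\chi v) \;=\; \theta\,\chi\,\Lc_{\vp}v \;+\; \theta\,[\Lc_{\vp},\chi]\,v \;+\; [\Lc_{\vp},\theta]\,\chi v,
\]
together with $\|\chi\,\Lc_{\vp}v\|_{L^{2}}=\|\Lc_{\vp}v\|_{H^{-1}_{\mathrm{scl}}}$, I arrive at
\[
\|v\|_{L^{2}} \;\leq\; \frac{C}{h}\|\Lc_{\vp}v\|_{H^{-1}_{\mathrm{scl}}} \;+\; \frac{C}{h}\|[\Lc_{\vp},\chi]v\|_{L^{2}} \;+\; O(e^{-c/h})\|v\|_{L^{2}}.
\]

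The whole argument then reduces to the commutator bound $\|[\Lc_{\vp},\chi]v\|_{L^{2}}\leq C h^{3}\|v\|_{L^{2}}$, which allows the second term on the right to be absorbed for $h$ small. Because $\vp(t,x)=t+x\cdot\omega$ is linear, the principal part $h^{2}(\partial_{t}^{2}-\Delta)+2h(\partial_{t}-\omega\cdot\n_{x})$ of $\Lc_{\vp}$ has constant coefficients and commutes with $\chi$; only the variable-coefficient pieces $2h^{2}(A_{0}\partial_{t}-A\cdot\n_{x})+2h(A_{0}-A\cdot\omega)+h^{2}\widetilde q$ contribute to $[\Lc_{\vp},\chi]$. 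Standard semiclassical pseudodifferential calculus then shows that for any smooth bounded $a$, the commutator $[a,\chi]$ has leading symbol $ih\,\nabla_{t,x}a\cdot\nabla_{\tau,\xi}\langle h(\tau,\xi)\rangle^{-1}$, which is an order $-2$ symbol in $hD$ of semiclassical size $O(h^{2})$; combined with the one-derivative loss in $\partial_{t},\partial_{j}$ estimated via $\|\partial v\|_{H^{-1}_{\mathrm{scl}}}\leq h^{-1}\|v\|_{L^{2}}$, this yields the claimed $O(h^{3})$ bound. This commutator bookkeeping, together with the careful handling of the non-compactly-supported $\chi v$ through the cutoff $\theta$, is the main technical point; once it is established, absorbing the error terms for $h$ sufficiently small delivers \eqref{Carleman estimate in Sobolev space of negative order}, and the same scheme applied to $\Lc_{\vp}^{*}$ delivers \eqref{Carleman estimate in Sobolev space of negative order adjoint operator}.
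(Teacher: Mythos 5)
Your overall scheme---apply the $L^{2}$ interior Carleman estimate to $\langle hD\rangle^{-1}v$ and control the commutator with $\langle hD\rangle^{-1}$---is the right idea and is essentially the paper's approach, but there is a genuine gap in how you localize. You cut off with $\theta\in C_{c}^{\infty}(Q)$ chosen so that $\theta\equiv 1$ near $\mathrm{supp}(v)$, and absorb the tails $(1-\theta)\chi v$ and $[\Lc_{\vp},\theta]\chi v$ by the $e^{-c/h}$ decay of the Bessel kernel. The decay rate $c$, however, is proportional to $\mathrm{dist}\left(\mathrm{supp}(v),\,\mathrm{supp}(\nabla\theta)\right)$, and because $\theta$ must be supported \emph{inside} $Q$, this distance degenerates to $0$ as $\mathrm{supp}(v)$ approaches $\partial Q$. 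Your $C$ and $h_{0}$ therefore depend on $v$, which is strictly weaker than what the lemma asserts and is not usable in Proposition \ref{Existence of solution for conjugated equation in adjoint problem}, where the estimate is needed at a \emph{fixed} $h$ uniformly over all $z\in C_{c}^{\infty}(Q)$. The fix---and what the paper actually does---is to cut off \emph{outward}: fix $\wt{Q}\supset\supset Q$, note that \eqref{Interior Carleman estimate} also holds for $u\in C_{c}^{\infty}(\wt{Q})$ (the same integration by parts, with all boundary terms of $\wt{Q}$ vanishing), and take $\chi\in C_{c}^{\infty}(\wt{Q})$ with $\chi\equiv 1$ on $Q$. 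Then $\mathrm{dist}(Q,\partial\wt{Q})$ is a fixed positive number and every constant is uniform over $v\in C_{c}^{\infty}(Q)$.

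A smaller point: your commutator count overcharges a power of $h$. The leading symbol of $[a,\langle hD\rangle^{-1}]$ is $\tfrac{1}{i}\nabla_{t,x}a\cdot\nabla_{\tau,\xi}\langle h(\tau,\xi)\rangle^{-1}$, with no extra $h$ prefactor; since $\nabla_{\tau,\xi}\langle h(\tau,\xi)\rangle^{-1}=-h^{2}(\tau,\xi)\langle h(\tau,\xi)\rangle^{-3}$ has magnitude $O(h)\langle h(\tau,\xi)\rangle^{-2}$, the commutator has semiclassical size $O(h)$ in $S^{-2}_{\mathrm{scl}}$, not $O(h^{2})$. Tracking this through gives $\|[\Lc_{\vp},\langle hD\rangle^{-1}]v\|_{L^{2}}=O(h^{2})\|v\|_{L^{2}}$ rather than $O(h^{3})$; multiplied by the $C/h$ loss in \eqref{Interior Carleman estimate} this is $O(h)$, which is still absorbable, so the error is harmless to the conclusion. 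For comparison, the paper proceeds slightly differently: it proves the $H^{-1}_{\mathrm{scl}}$ lower bound first for the constant-part operator $\Box_{\vp,\ve}$ via $\langle hD\rangle^{-1}\Box_{\vp,\ve}\langle hD\rangle=\Box_{\vp,\ve}+hR_{1}$, then adds the perturbation $\wt{P}$; your more direct single-commutator decomposition is equivalent once the cutoff direction is corrected.
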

 \begin{proof}We give the proof of the estimate in  \eqref{Carleman estimate in Sobolev space of negative order} and that of \eqref{Carleman estimate in Sobolev space of negative order adjoint operator} follows similarly.
 	We follow arguments used in \cite{Dos-Santos_Kenig_Sajostrand_Uhlmann_Magnetic_schrodinger_partial_data_2007}. We again consider the convexified weight 
 	\[
 	\wt{\vp}(t,x)=t+x\cdot \omega-\frac{ht^{2}}{2\ve},
 	\]
 	and as before consider the convexified operator: 
 	\begin{align*}
 	\Box_{\vp,\ve}:=h^{2}e^{-\wt{\vp}/h}\Box e^{\wt{\vp}/h}.
 	\end{align*}
 	From the properties of pseudodifferential operators, we have 
 	\begin{eqnarray*}
 		\left<hD\right>^{-1}(\Box_{\vp,\ve})\left<hD\right>=\Box_{\vp,\ve}+hR_{1}
 	\end{eqnarray*}
 	where $R_{1}$ is a semi-classical pseudo-differential operator of order $1$. Now 
 	\[
 	\lVert \Box_{\vp,\ve} \langle hD\rangle v\rVert_{H^{-1}_{\mathrm{scl}}(\Rb^{1+n})}=\lVert\langle hD\rangle^{-1} \Box_{\vp,\ve} \langle hD\rangle v\lVert_{L^{2}(\Rb^{1+n})}.
 	\]
 	and by the commutator property above, we get 
 	\[
 	\lVert \Box_{\vp,\ve} \langle hD\rangle v\rVert_{H^{-1}_{\mathrm{scl}}(\Rb^{1+n})}^{2}=\lVert \lb \Box_{\vp,\ve} +hR_{1}\rb v\rVert_{L^{2}(\Rb^{1+n})}^{2}\geq \frac{1}{2}\lVert \Box_{\vp,\ve} v\rVert_{L^{2}(\Rb^{1+n})}^{2}-\lVert hR_{1}v\rVert_{L^{2}(\Rb^{1+n})}^{2}.
 	\]
 	Let $Q\subset\subset \wt{Q}$, and for $v\in C_{c}^{\infty}(\wt{Q})$, using the estimate in \eqref{Estimate for P1P2 boundary} for $C_{c}^{\infty}$ functions combined with estimates for pseudodifferential operators, we have,
 	\begin{equation}\label{H-1 Carleman estimate for Wave}
 	\lVert \Box_{\vp,\ve} \langle hD\rangle v\rVert_{H^{-1}_{\mathrm{scl}}(\Rb^{1+n})}^{2}\geq \frac{Ch^{2}}{\ve} \lVert v\rVert_{H^{1}_{\mathrm{scl}}(\Rb^{1+n})}^{2}-h^{2}\lVert v\rVert_{H^{1}_{\mathrm{scl}}(\Rb^{1+n})}^{2}.
 	\end{equation}		 
 	
 	Using the expression for $\wt{P}$ (see \eqref{Remainder expression}), we get, for $v\in C_{c}^{\infty}(\wt{Q})$ and for $h$ small enough,
 	\[
 	\lVert \wt{P}v\rVert_{H^{-1}_{\mathrm{scl}}(\Rb^{1+n})}^{2}\leq C h^{2}\lVert v\rVert_{L^{2}(\Rb^{1+n})}^{2},
 	\]
 	and therefore
 	\[
 	\lVert \wt{P}\langle hD\rangle v\rVert_{H^{-1}_{\mathrm{scl}}(\Rb^{1+n})}^{2}\leq C h^{2}\lVert \langle hD\rangle v\rVert_{L^{2}(\Rb^{1+n})}^{2}=C h^{2} \lVert v\rVert_{H^{1}_{\mathrm{scl}}(\Rb^{1+n})}^{2}.
 	\]
 	Combining this with the estimate in \eqref{H-1 Carleman estimate for Wave} together with triangle inequality, we get, 
 	\begin{equation}\label{H-1 estimate 2}
 	\lVert \Lc_{\vp,\ve} \langle hD\rangle v\rVert_{H^{-1}_{\mathrm{scl}}(\Rb^{1+n})}^{2}\geq \frac{Ch^{2}}{\ve}\lVert v\rVert_{H^{1}_{\mathrm{scl}}(\Rb^{1+n})}^{2}
 	\end{equation}
 	for all $v\in C_{c}^{\infty}(\wt{Q})$. 
 	
 	Now to complete the proof, for any $u\in C_{c}^{\infty}(Q)$, consider $v=\chi \langle hD\rangle^{-1}u$, where $\chi\in C_{c}^{\infty}(\wt{Q})$ with $\chi \equiv 1$ on $Q$. Then from \eqref{H-1 estimate 2}, we have
 	\[
 	\frac{Ch^{2}}{\ve}\lVert \chi \langle hD\rangle^{-1}u \rVert_{H^{1}_{\mathrm{scl}}(\Rb^{1+n})}^{2}\leq \lVert \Lc_{\vp,\ve} \langle hD\rangle \chi \langle hD\rangle^{-1}u\rVert_{H^{-1}_{\mathrm{scl}}(\Rb^{1+n})}^{2}.
 	\]
 	The operator $\langle hD\rangle \chi \langle hD\rangle^{-1}$ is a semiclassical pseudodifferential operator of order $0$, and therefore we have
 	\[
 	\Lc_{\vp,\ve} \langle hD\rangle \chi \langle hD\rangle^{-1}u= \langle hD\rangle \chi \langle hD\rangle^{-1}\Lc_{\vp,\ve} +hR_{1},
 	\]
 	where $R_{1}$ is a semiclassical pseudodifferential operator of order $1$. 
 	\[
 	\frac{Ch^{2}}{\ve}\lVert \chi \langle hD\rangle^{-1}u \rVert_{H^{1}_{\mathrm{scl}}(\Rb^{1+n})}^{2}\leq \lVert \Lc_{\vp,\ve}u\rVert_{H^{-1}_{\mathrm{scl}}(\Rb^{1+n})}^{2} +h^{2}\lVert u\rVert_{L^{2}(\Rb^{1+n})}^{2}.
 	\]
 	Finally, write 
 	\[
 	\langle hD\rangle ^{-1}u=\chi \langle hD\rangle^{-1}u+(1-\chi) \langle hD\rangle^{-1}u,
 	\]
 	where $\chi$ is as above.
 	Then
 	\[
 	\lVert \langle hD\rangle^{-1} u\rVert_{H^{1}_{\mathrm{scl}}(\Rb^{1+n})}^{2}\geq \frac{1}{2}\lVert\chi \langle hD\rangle^{-1}u \rVert_{H^{1}_{\mathrm{scl}}(\Rb^{1+n})}^{2}-\lVert\lb 1- \chi\rb \langle hD\rangle^{-1} u \rVert_{H^{1}_{\mathrm{scl}}(\Rb^{1+n})}^{2}.
 	\]
 	
 	Since $(1-\chi)\langle hD\rangle^{-1}$ is a smoothing semiclassical pseudodifferential operator, taking $h$ small enough, and arguing as in the proof of the Carleman estimate, we  get,

 	\begin{align*}
 	\lVert \Lc_{\vp} u\rVert_{H^{-1}_{\mathrm{scl}}(\Rb^{1+n})}^{2}\geq Ch^{2}\lVert u\rVert_{L^{2}(\Rb^{1+n})}^{2}.
 	\end{align*}
 	Cancelling out the $h^{2}$ term, we finally have,
 	\[
 	\lVert u\rVert_{L^{2}(\Rb^{1+n})}\leq \frac{C}{h}\lVert  \Lc_{\vp}u\rVert_{H^{-1}_{\mathrm{scl}}(\Rb^{1+n})}.
 	\]
 	This completes the proof.
 \end{proof}
 
 \begin{proposition}\label{Existence of solution for conjugated equation in adjoint problem}
 	Let $\vp$, $\Ac$ and $q$ be as in Theorem \ref{Boundary Carleman estimate}.  For $h>0$ small enough and $v\in L^{2}(Q)$, there exists a solution $u\in H^{1}(Q)$ of 
 	\[ 
 	\Lc_{\vp}u=v,
 	\]
 	satisfying the estimate
 	\[
 	\lVert u\rVert_{H^{1}_{\mathrm{scl}}(Q)} \leq \frac{C}{h}\lVert v\rVert_{L^{2}(\Rb^{1+n})},
 	\] 
 	where $C>0$ is a constant independent of $h$.
 	
 \end{proposition}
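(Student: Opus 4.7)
The plan is to invoke the Carleman estimate \eqref{Carleman estimate in Sobolev space of negative order adjoint operator} for the adjoint operator $\Lc_{\vp}^{*}$ and use a standard Hahn--Banach/duality argument to produce $u$ as an element of the dual of $H^{-1}_{\mathrm{scl}}(\Rb^{1+n})$, namely $H^{1}_{\mathrm{scl}}(\Rb^{1+n})$.

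First I would consider the subspace
\[
V:=\Lc_{\vp}^{*}\bigl(C_{c}^{\infty}(Q)\bigr)\subset H^{-1}_{\mathrm{scl}}(\Rb^{1+n}),
\]
and define a linear functional $T$ on $V$ by
\[
T\bigl(\Lc_{\vp}^{*}w\bigr):=\langle w,v\rangle_{L^{2}(Q)},\qquad w\in C_{c}^{\infty}(Q),
\]
where $v$ is extended by zero outside $Q$. The estimate \eqref{Carleman estimate in Sobolev space of negative order adjoint operator} guarantees that the map $w\mapsto \Lc_{\vp}^{*}w$ is injective on $C_{c}^{\infty}(Q)$, so $T$ is well-defined. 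Moreover, Cauchy--Schwarz combined with \eqref{Carleman estimate in Sobolev space of negative order adjoint operator} gives
\[
\bigl|T\bigl(\Lc_{\vp}^{*}w\bigr)\bigr|\leq \|w\|_{L^{2}(Q)}\|v\|_{L^{2}(Q)}\leq \frac{C}{h}\|\Lc_{\vp}^{*}w\|_{H^{-1}_{\mathrm{scl}}(\Rb^{1+n})}\|v\|_{L^{2}(Q)},
\]
so $T$ is bounded on $V$ with norm at most $(C/h)\|v\|_{L^{2}(Q)}$.

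Next I would apply the Hahn--Banach theorem to extend $T$ to a continuous linear functional on all of $H^{-1}_{\mathrm{scl}}(\Rb^{1+n})$ with the same norm bound. Since the dual of $H^{-1}_{\mathrm{scl}}(\Rb^{1+n})$ is $H^{1}_{\mathrm{scl}}(\Rb^{1+n})$, the Riesz representation theorem produces a $u\in H^{1}_{\mathrm{scl}}(\Rb^{1+n})$ such that
\[
T(f)=\langle f,u\rangle\quad\text{for all } f\in H^{-1}_{\mathrm{scl}}(\Rb^{1+n}),\qquad \|u\|_{H^{1}_{\mathrm{scl}}(\Rb^{1+n})}\leq \frac{C}{h}\|v\|_{L^{2}(Q)}.
\]
Testing against $f=\Lc_{\vp}^{*}w$ for arbitrary $w\in C_{c}^{\infty}(Q)$ yields
\[
\langle \Lc_{\vp}^{*}w,u\rangle =\langle w,v\rangle_{L^{2}(Q)},
\]
which is exactly the distributional identity $\Lc_{\vp}u=v$ on $Q$. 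Restricting $u$ to $Q$ gives the desired solution, and the $H^{1}_{\mathrm{scl}}(Q)$ bound follows from the $H^{1}_{\mathrm{scl}}(\Rb^{1+n})$ bound above.

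The only real subtlety I anticipate is bookkeeping with the duality pairing between $H^{1}_{\mathrm{scl}}$ and $H^{-1}_{\mathrm{scl}}$ (making sure the pairing on $C_{c}^{\infty}(Q)\times C_{c}^{\infty}(Q)$ reduces to the $L^{2}$ inner product, which it does by the definition of $\langle hD\rangle^{s}$). Everything else is a mechanical consequence of the $H^{-1}_{\mathrm{scl}}$ Carleman estimate already established in Lemma~\ref{weighted H(-1) Carleman estimate} for the adjoint operator.
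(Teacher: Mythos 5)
Your proposal is correct and follows essentially the same route as the paper: define the functional on $\Lc_{\vp}^{*}(C_{c}^{\infty}(Q))$ via $\Lc_{\vp}^{*}w\mapsto\langle w,v\rangle_{L^{2}(Q)}$, bound it using the $H^{-1}_{\mathrm{scl}}$ Carleman estimate for the adjoint, extend by Hahn--Banach, and represent by a Riesz element $u\in H^{1}_{\mathrm{scl}}(\Rb^{1+n})$ which is then shown to solve $\Lc_{\vp}u=v$ distributionally in $Q$. The only cosmetic difference is that you explicitly flag injectivity and the duality-pairing bookkeeping; the paper leaves those implicit.
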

 \begin{proof} The proof uses standard functional analysis arguments.  
 	Consider the space $S:=\left\{ \Lc_{\vp}^{*}u: u\in C_{c}^{\infty}(Q)\right\}$ as a subspace of $H^{-1}(\mathbb{R}^{1+n})$ and define a linear form $L$ on $S$ by 
 	\begin{eqnarray*}
 		L( \Lc_{\vp}^{*}z)=\int\limits_{Q}z(t,x)v(t,x)\D x \D t, \mbox{ for } z\in C_{c}^{\infty}(Q).
 	\end{eqnarray*}
 	This is a well-defined continuous linear functional by the Carleman estimate \eqref{Carleman estimate in Sobolev space of negative order adjoint operator}{.}\color{black}\ 
 	We have 
 	\begin{eqnarray*}
 		\left|L( \Lc_{\vp}^{*}z)\right|\leq \lVert z\rVert_{L^{2}(Q)}\lVert v\rVert_{L^{2}(Q)}\leq \frac{C}{h}\lVert v\rVert_{L^{2}(Q)}\lVert \Lc_{\vp}^{*}z\rVert_{H^{-1}_{\mathrm{scl}}(\Rb^{1+n})}, z\in C_{c}^{\infty}(Q).
 	\end{eqnarray*}
 	By Hahn-Banach theorem, we {can}  extend $L$ to $H^{-1}(\Rb^{1+n})$ (still denoted as $L$) and it satisfies  $\lVert L\rVert\leq \frac{C}{h} \left|\left|v\right|\right|_{L^{2}(Q)}$. By Riesz representation theorem, there exists a unique  $u\in H^{1}(\mathbb{R}^{1+n})$  such that
 	\begin{eqnarray*}
 		L(f)=\langle f,u\rangle_{L^{2}(\Rb^{1+n})} \mbox{ for all } f\in H^{-1}(\Rb^{1+n}) \mbox{ with } \lVert u\rVert_{H^{1}_{\mathrm{scl}}(\Rb^{1+n})}\leq \frac{C}{h}\lVert v\rVert_{L^{2}(Q)}.
 	\end{eqnarray*}
 	Taking $f= \Lc_{\vp}^{*}z $, for $z\in C_{c}^{\infty}(Q)$, we get 
 	\begin{align*}
 	L(\Lc_{\vp}^{*}z)=\langle \Lc_{\vp}^{*}z,u\rangle_{L^{2}(\Rb^{1+n})}=\langle z,\Lc_{\vp}u\rangle_{L^{2}(\Rb^{1+n})}.
 	\end{align*}
 	Therefore for all $z\in C_{c}^{\infty}(Q)$,
 	\[
 	\langle  z,\Lc_{\vp} u\rangle =\langle z,v\rangle.
 	\]
 	Hence 
 	\begin{eqnarray*}
 		\Lc_{\vp}u=v \mbox{ in } Q \mbox{ with }  \lVert u\rVert_{H^{1}_{\mathrm{scl}}(Q)}\leq \frac{C}{h}\lVert v\rVert_{L^{2}(Q)}.
 	\end{eqnarray*}
 	This completes the proof of the proposition.
 \end{proof}
 \section{Construction of geometric optics solutions}\label{GO solutions}
 In this section we construct geometric optics solutions for $\Lc_{\mathcal{A},q}u=0$ and its adjoint operator $\Lc_{\mathcal{A},q}^{*}u= \Lc_{-\Ac,\overline{q}}u=0$.
 
 \begin{proposition} 
 	Let $\Lc_{\Ac,q}$ be as in \eqref{definition of operator}. 
 	\begin{enumerate}
 		\item (Exponentially decaying solutions) There exists an $h_{0}>0$ such that for all $0<h\leq h_{0}$, we can find $v\in H^{1}(Q)$ satisfying $\Lc_{-\Ac,\overline{q}}v=0$ of the form 
 		\begin{equation}\label{Exponentially decaying solution}
 		v_{d}(t,x)= e^{-\frac{\vp}{h}}\lb B_{d}(t,x) + hR_{d}(t,x;h)\rb,
 		\end{equation}
 		where $\vp(t,x)=t+x\cdot \omega$, 
 		\begin{equation}\label{Expression for Bd}
 		B_{d}(t,x)=\exp\left(-\int\limits_{0}^{\infty}(1,-\omega)\cdot\mathcal{A}(t+s,x-s\omega)\D s\right)
 		\end{equation}
 		with $\zeta\in(1,-\omega)^{\perp}$ and $R_{d}\in H^{1}(Q)$ satisfies
 		\begin{equation}\label{estimate for Remainder term1}
 		\lVert R_{d}\rVert_{H^{1}_{\mathrm{scl}}(Q)}\leq C .
 		\end{equation}
 		\item (Exponentially growing solutions) There exists an $h_{0}>0$ such that for all $0<h\leq h_{0}$, we can find $v\in H^{1}(Q)$ satisfying $\Lc_{\Ac,q}v=0$ of the form 
 		\begin{equation}\label{Exponentially growing solution}
 		v_{g}(t,x)= e^{\frac{\vp}{h}}\lb B_{g}(t,x) + hR_{g}(t,x;h)\rb,
 		\end{equation}
 		where $\vp(t,x)=t+x\cdot \omega$, 
 		\begin{equation}\label{Expression for Bg}
 		B_{g}(t,x)=e^{-i\zeta\cdot(t,x)}\exp\left(\int\limits_{0}^{\infty}(1,-\omega)\cdot\mathcal{A}(t+s,x-s\omega)\D s\right)
 		\end{equation}
 		with $\zeta\in(1,-\omega)^{\perp}$ and $R_{g}\in H^{1}(Q)$ satisfies
 		\begin{equation}\label{estimate for Remainder term2}
 		\lVert R_{g}\rVert_{H^{1}_{\mathrm{scl}}(Q)}\leq C .
 		\end{equation}
 	\end{enumerate}
 \end{proposition}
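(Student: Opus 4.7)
The plan is to handle the decaying and growing solutions in parallel with a WKB-type ansatz and then invoke the Carleman solvability result. Writing $\Lc_\vp^{*} = h^2 e^{\vp/h}\Lc_{-\Ac,\overline{q}}e^{-\vp/h}$ and using that $\vp(t,x)=t+x\cdot\omega$ satisfies the eikonal identity $(\PD_t\vp)^2-|\n_x\vp|^2=0$ together with $\Box\vp=0$, a direct conjugation computation gives
\[
\Lc_\vp^{*} = -2h\,T + h^{2}\mathcal{M},\qquad T := \PD_t - \omega\cdot\n_x - (A_0 - A\cdot\omega),
\]
where $\mathcal{M}$ is a second-order differential operator with smooth coefficients; an analogous identity $\Lc_\vp = 2h\,\wt{T} + h^{2}\wt{\mathcal{M}}$ holds with $\wt{T} := \PD_t - \omega\cdot\n_x + (A_0 - A\cdot\omega)$. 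Inserting the ansatz $v_d = e^{-\vp/h}(B_d + hR_d)$ into $\Lc_{-\Ac,\overline{q}}v_d = 0$ and separating powers of $h$ yields a leading transport equation $T B_d = 0$ together with the remainder equation $\Lc_\vp^{*} R_d = -h\,\mathcal{M} B_d$.

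Next I would solve the transport equation by integrating along the null characteristic $s\mapsto (t+s,x-s\omega)$. Writing $B_d=e^{F_d}$, the equation $T B_d = 0$ becomes $(\PD_t-\omega\cdot\n_x)F_d = A_0-A\cdot\omega$, which along the characteristic reads $\tfrac{d}{ds}F_d(t+s,x-s\omega)=(A_0-A\cdot\omega)(t+s,x-s\omega)$; integrating from $s=0$ to $s=\infty$ (the integral converges because $\Ac\in C_c^\infty(Q)$) produces exactly the formula \eqref{Expression for Bd}. For the growing case the analogous ODE for $\wt{T}B_g = 0$ has the opposite sign and yields $\exp\bigl(\int_0^\infty(1,-\omega)\cdot\Ac(t+s,x-s\omega)\,\D s\bigr)$; multiplication by the oscillatory phase $e^{-\I\zeta\cdot(t,x)}$ is permitted because $(\PD_t-\omega\cdot\n_x)\bigl(e^{-\I\zeta\cdot(t,x)}\bigr)=-\I(\zeta_0-\zeta'\cdot\omega)e^{-\I\zeta\cdot(t,x)}=0$ whenever $\zeta\in(1,-\omega)^\perp$. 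This produces \eqref{Expression for Bg}, and both $B_d, B_g$ are smooth on $\overline{Q}$ with bounds independent of $h$.

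Once $B_d$ kills the leading transport term, the right-hand side $-h\,\mathcal{M} B_d$ of the remainder equation lies in $L^2(Q)$ with norm $O(h)$. The existence of $R_d\in H^1(Q)$ satisfying $\lVert R_d\rVert_{H^{1}_{\mathrm{scl}}(Q)}\le C$ then follows from the adjoint analogue of Proposition \ref{Existence of solution for conjugated equation in adjoint problem}: repeating the Hahn--Banach/Riesz argument used there but interchanging the roles of \eqref{Carleman estimate in Sobolev space of negative order} and \eqref{Carleman estimate in Sobolev space of negative order adjoint operator} yields an $H^1$-solution of $\Lc_\vp^{*} R_d = f$ satisfying $\lVert R_d\rVert_{H^{1}_{\mathrm{scl}}(Q)}\le Ch^{-1}\lVert f\rVert_{L^2(Q)}$, and the factor $h^{-1}$ is absorbed by the $h$ on the right-hand side. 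The same argument applied to $\Lc_\vp$ with $B_g$ in place of $B_d$ produces $R_g$ and hence the growing solution $v_g$.

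The only step requiring care is keeping signs straight through the conjugations $e^{\pm\vp/h}$ and matching each transport operator to the correct sign convention on $\Ac$ (which is precisely why the decaying solution solves the $-\Ac$ equation while the growing one solves the $+\Ac$ equation); beyond that, the construction is a routine combination of the Carleman solvability result with the method of characteristics, so I do not anticipate any substantive obstacles.
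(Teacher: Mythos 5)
Your proposal is correct and follows essentially the same route as the paper: plug the WKB ansatz into the conjugated operator, kill the $O(h^{-1})$ (equivalently $O(h)$ after the $h^{2}$ rescaling) transport term by integrating $(1,-\omega)\cdot\bigl(\nabla_{t,x}\mp\Ac\bigr)$ along the null line $s\mapsto(t+s,x-s\omega)$, observe that the phase $e^{-\I\zeta\cdot(t,x)}$ is annihilated by $(1,-\omega)\cdot\nabla_{t,x}$ when $\zeta\in(1,-\omega)^{\perp}$, and then solve the remainder equation via Proposition~\ref{Existence of solution for conjugated equation in adjoint problem}. The one place you are more careful than the paper is in flagging explicitly that producing $R_{d}$ requires the $\Lc_{\vp}^{*}$-analogue of Proposition~\ref{Existence of solution for conjugated equation in adjoint problem}, obtained by swapping the roles of \eqref{Carleman estimate in Sobolev space of negative order} and \eqref{Carleman estimate in Sobolev space of negative order adjoint operator} in the Hahn--Banach argument; the paper simply says ``similarly,'' so your remark is a useful clarification rather than a deviation.
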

 \begin{proof}
 	We have 
 	\[
 	\Lc_{A,q}v = \Box v+2A_{0} \PD_{t}v-2A\cdot \n_{x}v+ \lb \PD_{t}A_{0}-\n_{x}\cdot A +|A_{0}|^{2}-|A|^{2}+q\rb v.
 	\]
 	Letting $v$ of the form 
 	\[
 	v(t,x) = e^{\frac{\vp}{h}}\lb B_{g}+ hR_{g}\rb,
 	\]
 	and setting the term involving $h^{-1}$ to be $0$, we get,
 	\[
 	(1,-\omega)\cdot \lb \n_{t,x} B_{g} +\lb A_{0},A\rb B_{g}\rb=0.
 	\]
 	One solution of this equation is 
 	\[
 	B_{g}(t,x)=\exp\lb \int\limits_{0}^{\infty} \lb 1,-\omega\rb\cdot \Ac(t+s,x-s\omega) \D s\rb.
 	\]
 	Alternately, another solution is 
 	\[
 	B_{g}(t,x)= e^{-\I \zeta \cdot (t,x)}\exp\lb \int\limits_{0}^{\infty} \lb 1,-\omega\rb\cdot \Ac(t+s,x-s\omega) \D s\rb,
 	\]
 	provided $\zeta \in (1,-\omega)^{\perp}$.
 	
 	Now we have 
 	\[
 	\Lc^{*}_{\Ac,q}=\Lc_{-\Ac,\overline{q}}.
 	\]
 	For this adjoint operator, the equation satisfied by $B_{d}$ is 
 	\[
 	(1,-\omega)\cdot \lb \n_{t,x} B_{g} -\lb A_{0},A\rb B_{g}\rb=0.
 	\]
 	We let 
 	\[
 	B_{d}(t,x)=\exp\lb-\int\limits_{0}^{\infty} \lb 1,-\o\rb\cdot \Ac(t+s,x-s\o) \D s\rb.
 	\]
 	
 	Now $R_{g}$ satisfies 
 	\[
 	\Lc_{\vp} R_{g}=-h\Lc_{\Ac,q}B_{g}.
 	\]
 	Then using the estimate in Proposition \ref{Existence of solution for conjugated equation in adjoint problem}, we get that 
 	\[
 	\lVert R_{g}\rVert_{H^{1}_{\mathrm{scl}}(Q)}\leq C \lVert \Lc_{\Ac,q} B_{g}\rVert_{L^{2}(Q)}.
 	\]
 	Similarly, 
 	\[
 	\lVert R_{d}\rVert_{H^{1}_{\mathrm{scl}}(Q)}\leq C \lVert \Lc_{-\Ac,\overline{q}} B_{d}\rVert_{L^{2}(Q)}.
 	\]
 	The proof is complete.
 \end{proof}

 \section{Integral Identity}\label{Integral identity}
 In this section, we derive an integral identity involving the coefficients $\Ac$ and $q$ using the geometric optics solutions described in the previous section. 
 
 Let $u_{i}$ be the solutions to the following initial boundary value problems with vector field coefficient $\Ac^{(i)}$ and scalar potential $q_{i}$ for $i=1,2$.
 \begin{align}\label{Equation for ui}
 \begin{aligned}
 \begin{cases}
 &\Lc_{\mathcal{A}^{(i)},q_{i}}u_{i}(t,x)=0;\ (t,x)\in Q\\
 &u_{i}(0,x)=\phi(x),\ \partial_{t}u_{i}(0,x)=\psi(x); \ x\in\Omega\\
 &u_{i}(t,x)=f(t,x),\ (t,x)\in \Sigma.
 \end{cases}
 \end{aligned}
 \end{align} 
 Let us denote 
 \begin{align}\label{Difference defn}
 \notag &u(t,x):=(u_{1}-u_{2})(t,x)\\
 \notag &\mathcal{A}(t,x):=\lb \mathcal{A}^{(2)}-\mathcal{A}^{(1)}\rb(t,x):=(A_{0}(t,x),A_{1}(t,x),\cdots,A_{n}(t,x))\\
 &\wt{q}_{i}:= \partial_{t}A_{0}^{(i)}-\nabla_{x}\cdot\mathcal{A}^{(i)}+|A_{0}^{(i)}|^{2}-|\mathcal{A}^{(i)}|^{2}+q_{i}\\
 \notag &\wt{q}:= \wt{q}_{2}-\wt{q}_{1}.
 \end{align}
 Then $u$ is the solution to the following initial boundary value problem: 
 \begin{align}\label{Equation for u linear}
 \begin{aligned}
 \begin{cases}
 &\Lc_{\mathcal{A}^{(1)},q_{1}}u(t,x)
 =-2A\cdot\nabla_{x}u_{2}+2A_{0}\partial_{t}u_{2}+\wt{q}u_{2} \\
 & u(0,x)=\partial_{t}u(0,x)=0,\ x\in\Omega\\
 &u|_{\Sigma}=0.
 \end{cases}
 \end{aligned}
 \end{align}
 Let $v(t,x)$ of the form given by \eqref{Exponentially decaying solution} be the solution to following equation 
 \begin{align}\label{adjoint equation for u1}
 \Lc_{-\mathcal{A}^{(1)},\overline{q}_{1}}v(t,x)
 =0\  \text{in}\ Q.
 \end{align}
 Also let $u_{2}$ of the form given by  \eqref{Exponentially growing solution} be solution to the following equation
 \begin{align}\label{equation for u2}
 \begin{aligned}\
 \Lc_{\mathcal{A}^{(2)},q_{2}}u_{2}(t,x)
 &=0,\   \text{in}\   Q.
 \end{aligned}
 \end{align}
 By the well-posedness result from \cite{Katchalov_Kurylev_Lassas_Book_2001, Lasiecka_Lions_Triggiani_Nonhomogeneous_BVP_hyperbolic_1986}, we have $u\in H^{1}(Q)$ and $\partial_{\nu}u\in L^{2}(\Sigma)$.

 Now we multiply  \eqref{Equation for u linear} by $\overline{v(t,x)}\in H^{1}(Q)$ and integrate over $Q$. We get, after integrating by parts, taking into account the following: $u|_{\Sigma}=0$, $u(T,x)=0$, $\partial_{\nu}u|_{G}=0$, $u|_{t=0}=\PD_{t}u|_{t=0}=0$ and $\Ac^{(1)}$ is compactly supported in $Q$:   
 \begin{align*}
 \int\limits_{Q}\Lc_{\Ac^{(1)},q_{1}}u(t,x) \overline{v(t,x)} \D x \D t-\int\limits_{Q}u(t,x) \overline{\Lc_{-\Ac^{(1)},\overline{q}_{1}}v(t,x)} \D x \D t&=\int\limits_{\Omega}\partial_{t}u(T,x) \overline{v(T,x)}\D x\\
 &-\int\limits_{\Sigma\setminus{G}}\partial_{\nu}u(t,x)\overline{v(t,x)}\D S_{x}\D t.
 \end{align*}
 Now using the fact that $\Lc_{-\mathcal{A}^{(1)},\overline{q}_{1}}v(t,x)=0$ in $Q$ and 
 \[
 \Lc_{\Ac^{(1)},q_{1}}u(t,x)=-2A\cdot\nabla_{x}u_{2}+2A_{0}\partial_{t}u_{2}+\wt{q}u_{2},\]
 we get,
 \begin{align*}
 \int\limits_{Q}\left(-2A\cdot\nabla_{x}u_{2}+2A_{0}\partial_{t}u_{2}+\wt{q}u_{2}\right)\overline{v(t,x)}\D x \D t&=\int\limits_{\Omega}\partial_{t}u(T,x) \overline{v(T,x)}\D x\\
 &-\int\limits_{\Sigma\setminus{G}}\partial_{\nu}u(t,x)\overline{v(t,x)}\D S_{x}\D t.
 \end{align*}
 \begin{lemma}
 	Let  $u_{i}$ for $i=1,2$ solutions to \eqref{Equation for ui} with $u_{2}$ of the form \eqref{Exponentially growing solution}. Let $u=u_{1}-u_{2}$, and $v$ be of the form \eqref{Exponentially decaying solution}. Then 
 	\begin{equation}\label{first remainder}
 	h\int\limits_{\O} \PD_{t}u(T,x) \overline{v(T,x)} \D x \to 0 \mbox{ as } h\to 0^{+}.
 	\end{equation}
 	\begin{equation}\label{second remainder}
 	h\int\limits_{\Sigma\setminus{G}}\partial_{\nu}u(t,x)\overline{v(t,x)}\D S_{x}\D t \to 0 \mbox{ as } h\to 0^{+}.
 	\end{equation}
 \end{lemma}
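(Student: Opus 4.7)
The plan is to apply the boundary Carleman estimate \eqref{Boundary carleman estimate} to the difference $u = u_1 - u_2$ and then close the argument against the explicit form of $v$ via Cauchy--Schwarz. The hypothesis $\Lambda_{\Ac^{(1)},q_1} = \Lambda_{\Ac^{(2)},q_2}$ forces $u(T,\cdot) = 0$ in $H^1(\Omega)$ and $\partial_\nu u|_G = 0$ in $L^2(G)$; in particular $\nabla_x u(T,\cdot) = 0$, so the two final-time contributions on the RHS of \eqref{Boundary carleman estimate} vanish. I then choose $\omega \in \Sb^{n-1}$ sufficiently close to $\omega_0$ that $\partial\Omega \setminus G' \subset \{x \in \partial\Omega : \nu(x)\cdot\omega \geq c\}$ for some $c > 0$; this simultaneously gives $\Sigma_{-,\omega} \subset G$ (killing the remaining boundary term on the RHS, since $\partial_\nu u = 0$ on $G$) and $\partial_\nu \vp = \nu\cdot\omega \geq c$ on $\Sigma \setminus G \subset \Sigma_{+,\omega}$.

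Next I estimate the source in \eqref{Equation for u linear}. Since $u_2 = e^{\vp/h}(B_g + hR_g)$, the derivatives $e^{-\vp/h}\partial_t u_2$ and $e^{-\vp/h}\nabla_x u_2$ are of order $h^{-1}$ in $L^2(Q)$ (using $\|B_g\|_{C^1}\le C$ and \eqref{estimate for Remainder term2}), so
\begin{equation*}
\|h e^{-\vp/h}\Lc_{\Ac^{(1)},q_1}u\|_{L^2(Q)} = \|h e^{-\vp/h}(-2A\cdot\nabla_x u_2 + 2A_0\partial_t u_2 + \wt q\, u_2)\|_{L^2(Q)} \le C
\end{equation*}
uniformly in $h$. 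Applying \eqref{Boundary carleman estimate} (extended to $u\in H^1(Q)$ with $\partial_\nu u\in L^2(\Sigma)$ by a standard density argument based on the well-posedness results of \cite{Katchalov_Kurylev_Lassas_Book_2001, Lasiecka_Lions_Triggiani_Nonhomogeneous_BVP_hyperbolic_1986}), and converting the weight $\partial_\nu\vp$ on $\Sigma_{+,\omega}$ into a constant via the lower bound $c$, I obtain
\begin{equation*}
\sqrt{h}\,\|e^{-\vp(T,\cdot)/h}\partial_t u(T,\cdot)\|_{L^2(\Omega)} + \sqrt{h}\,\|e^{-\vp/h}\partial_\nu u\|_{L^2(\Sigma\setminus G)} \le C.
\end{equation*}

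For the final step I apply Cauchy--Schwarz against $v(t,x) = e^{-\vp/h}(B_d + hR_d)$:
\begin{equation*}
\left|h\int_\Omega \partial_t u(T,x)\,\overline{v(T,x)}\,\D x\right| \le h\,\|e^{-\vp(T,\cdot)/h}\partial_t u(T,\cdot)\|_{L^2(\Omega)}\,\|B_d(T,\cdot) + h R_d(T,\cdot)\|_{L^2(\Omega)} \le C\sqrt{h},
\end{equation*}
since $B_d$ is bounded and, by the trace inequality $H^1(Q)\hookrightarrow L^2(\{T\}\times\Omega)$ applied to $R_d$ (noting $\|R_d\|_{H^1(Q)}\le C/h$ follows from \eqref{estimate for Remainder term1}), $h\,\|R_d(T,\cdot)\|_{L^2(\Omega)}\le C$. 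This proves \eqref{first remainder}; the identical argument with the $t=T$ trace replaced by the lateral trace on $\Sigma\setminus G$ yields \eqref{second remainder}. The main obstacle is controlling $\|h e^{-\vp/h}\Lc_{\Ac^{(1)},q_1}u\|_{L^2(Q)}$ uniformly in $h$ against the $h^{-1}$ singularity coming from differentiating the exponential factor in $u_2$; this is exactly where the semiclassical bound on the GO remainder $R_g$ obtained in the previous section is essential.
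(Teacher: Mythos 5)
Your proof is correct and follows essentially the same route as the paper: both apply the boundary Carleman estimate \eqref{Boundary carleman estimate} to $u=u_1-u_2$, use the vanishing of $u(T,\cdot)$, $\nabla_x u(T,\cdot)$, and $\partial_\nu u|_G$ (so that, with $\omega$ near $\omega_0$, only the $\|he^{-\vp/h}\Lc_{\Ac^{(1)},q_1}u\|_{L^2(Q)}$ term survives on the right), bound that source term uniformly in $h$ via the GO structure of $u_2$ and the semiclassical bound on $R_g$, and then close by Cauchy--Schwarz against $v$ using the trace estimate for $R_d$. You are slightly more explicit than the paper in spelling out why the final-time and $\Sigma_{-,\omega}$ terms drop and in how $h\|R_d\|_{L^2}$ is controlled via the trace theorem, but there is no substantive difference in method.
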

 \begin{proof}
 	
 	Using  \eqref{Exponentially decaying solution}, \eqref{estimate for Remainder term1} and Cauchy-Schwartz inequality, we get 
 	\begin{align*}
 	&\left|h\int\limits_{\Omega}\partial_{t}u(T,x)\overline{v(T,x)}\D x\right|\leq \int\limits_{\Omega}h\left|\partial_{t}u(T,x)e^{-\frac{\vp(T,x)}{h}}\overline{\left(B_{d}(T,x)+h R_{d}(T,x)\right)}\right|\D x \\
 	&\leq C\left(\int\limits_{\Omega}h^{2}\left|\partial_{t}u(T,x)e^{-\frac{\vp(T,x)}{h}}\right|^{2}\D x\right)^{\frac{1}{2}}\left(\int\limits_{\Omega}\left|e^{-i\xi\cdot(T,x)}+h\overline{R_{d}(T,x)}\right|^{2}\D x\right)^{\frac{1}{2}}\\
 	&\leq C\left(\int\limits_{\Omega}h^{2}\left|\partial_{t}u(T,x)e^{-\frac{\vp(T,x)}{h}}\right|^{2}\D x\right)^{\frac{1}{2}}\left(1+\lVert hR_{d}(T,\cdot)\rVert^{2}_{L^{2}(\Omega)}\right)^{\frac{1}{2}}\\
 	&\leq C \left(\int\limits_{\Omega}h^{2}\left|\partial_{t}u(T,x)e^{-\frac{\vp(T,x)}{h}}\right|^{2}\D x\right)^{\frac{1}{2}}.
 	\end{align*}
 	
 	Now using the boundary Carleman estimate \eqref{Boundary Carleman estimate}, 	we get, 
 	
 	\begin{align*}
 	h\int\limits_{\Omega}\left|\partial_{t}u(T,x)e^{-\frac{\vp(T,x)}{h}}\right|^{2}\D x& \leq C\lVert he^{-\vp/h} \Lc_{\Ac^{(1)},q_{1}} u\rVert_{L^{2}(Q)}^{2}\\
 	&=C\lVert he^{-\vp/h}\lb 2 A_{0}\PD_{t} u_{2}-2A\cdot \n_{x} u_{2}+\wt{q} u_{2}\rb\lVert^{2}_{L^{2}(Q)}.
 	\end{align*}
 	Now substituting \eqref{Exponentially growing solution} for $u_{2}$, we get,
 	\begin{align*}
 	he^{-\vp/h}\lb 2 A_{0}\PD_{t} u_{2}-2A\cdot \n_{x} u_{2}+\wt{q} u_{2}\rb& = \lb 2A_{0}\PD_{t}\vp-2A\cdot \n_{x}\vp +\wt{q}\rb \lb B_{g}+hR_{g}\rb\\
 	&+ h\lb 2A_{0}\PD_{t}-2A\cdot \n_{x}+\wt{q}\rb \lb B_{g}+hR_{g}\rb
 	\end{align*}	
 	Therefore 
 	\[
 	\lVert he^{-\vp/h}\lb 2 A_{0}\PD_{t} u_{2}-2A\cdot \n_{x} u_{2}+\wt{q} u_{2}\rb\lVert^{2}_{L^{2}(Q)}\leq C,
 	\]
 	uniformly in $h$.\\
 	
 	Thus, we have 
 	\[
 	\lb h^{2}\int\limits_{\Omega}\left|\partial_{t}u(T,x)e^{-\frac{\vp(T,x)}{h}}\right|^{2}\D x\rb^{\frac{1}{2}}\leq C\sqrt{h}.
 	\]
 	
 	\color{black}
 	Therefore
 	\[
 	h\int\limits_{\O} \PD_{t}u(T,x) \overline{v(T,x)} \D x \to 0 \mbox{ as } h\to 0^{+}.
 	\]
 	
 	For $\ve>0$, define 
 	\[
 	\PD\Omega_{+,\ve,\omega}=\{x\in \PD \Omega: \nu(x) \cdot \omega > \ve\}.
 	\]
 	and
 	\[
 	\Sigma_{+,\ve,\omega}=(0,T)\times \PD \Omega_{+,\ve,\omega}.
 	\]
 	
 	Next we prove \eqref{second remainder}.  Since $\Sigma\setminus G\subseteq \Sigma_{+,\ve,\omega}$ for all $\omega$ such that $|\omega-\omega_{0}|\leq \ve$, substituting $v=v_{d}$ from \eqref{Exponentially decaying solution}, in \eqref{second remainder} we have 
 	\begin{align*}
 	\begin{aligned}
 	&\left|\int\limits_{\Sigma\setminus{G}}\partial_{\nu}u(t,x)v(t,x)\D S_{x}\D t\right|\leq\int\limits_{\Sigma{+,\ve,\omega}}\left|\partial_{\nu}u(t,x)e^{-\frac{\vp}{h}}\left(B_{d}+hR_{d}\right)(t,x)\right|\D S_{x}\D t\\
 	&\leq C\left(1+\lVert hR_{d}\rVert_{L^{2}(\Sigma)}^{2}\right)^{\frac{1}{2}}\left(\int\limits_{\Sigma{+,\ve,\omega}}\left|\partial_{\nu}u(t,x)e^{-\frac{\vp}{h}}\right|^{2}\D S_{x}\D t\right)
 	\end{aligned}
 	\end{align*}
 	with $C>0$ is independent of $h$ and this inequality holds for all $\omega $ such that $|\omega -\omega_{0}|\leq \ve$. Next by trace theorem, we have that  $\lVert R_{d}\rVert_{L^{2}(\Sigma)}\leq C\lVert R_{d}\rVert_{H^{1}_{\mathrm{scl}(Q)}}$.
 	
 	Using this, we get 
 	\begin{align*}
 	\left|\int\limits_{\Sigma\setminus{G}}\partial_{\nu}u(t,x)v(t,x)\D S_{x}\D t\right|\leq C\left(\int\limits_{\Sigma{+,\ve,\omega}}\left|\partial_{\nu}u(t,x)e^{-\frac{\vp}{h}}\right|^{2}\D S_{x}\D t\right)^{\frac{1}{2}}.
 	\end{align*}
 	
 	Now 
 	\begin{align*}
 	\int\limits_{\Sigma_{+},\ve,\omega}\left|\partial_{\nu}u(t,x)e^{-\frac{\vp}{h}}\right|^{2}\D S_{x}\D t& =\frac{1}{\ve}\int \limits_{\Sigma_{+},\ve,\omega}\ve \left|\partial_{\nu}u(t,x)e^{-\frac{\vp}{h}}\right|^{2}\D S_{x}\D t\\
 	&\leq \frac{1}{\ve}\int\limits_{\Sigma_{+},\ve,\omega} \PD_{\nu}\vp \left|\partial_{\nu}u(t,x)e^{-\frac{\vp}{h}}\right|^{2}\D S_{x}\D t.
 	\end{align*}
 	Using the boundary Carleman estimate \eqref{Boundary Carleman estimate}, we have 
 	\[
 	\frac{h}{\ve}\int\limits_{\Sigma_{+},\ve,\omega} \PD_{\nu}\vp \left|\partial_{\nu}u(t,x)e^{-\frac{\vp}{h}}\right|^{2}\D S_{x}\D t \leq C\lVert he^{-\vp/h} \Lc_{\Ac^{(1)},q_{1}} u\rVert_{L^{2}(Q)}^{2}.
 	\]
 	We now proceed as before to conclude that 
 	\[
 	h\int\limits_{\Sigma\setminus{G}}\partial_{\nu}u(t,x)\overline{v(t,x)}\D S_{x}\D t \to 0 \mbox{ as } h\to 0^{+}.
 	\]
 \end{proof}

 \section{Proof of Theorem \ref{Main Theorem}}\label{Proof for Uniqueness}
 In this section, we prove the uniqueness results.
 \subsection{Recovery of vector potential $\Ac$}\label{Recovery of vector potential}
 We consider the integral, 
 \[
 \int\limits_{Q}\left(-2A\cdot\nabla_{x}u_{2}+2A_{0}\partial_{x}u_{2}+\wt{q}u_{2}\right )(t,x)\overline{v(t,x)}\D x \D t.
 \]
 Substituting \eqref{Exponentially growing solution} for $u_{2}$ and \eqref{Exponentially decaying solution} for $v$ into the above equation, and letting $h\to 0^{+}$, we arrive at 
 \[
 \int\limits_{Q}\left(-\omega\cdot A+A_{0}\right)\overline{B_{d}(t,x)}B_{g}(t,x)\D x \D t=0 \mbox{ for all } \o \in \Sb^{n-1} \mbox{ such that } |\omega-\omega_{0}|\leq \ve.
 \] 
 Denote $\wt{\omega}:=(1,-\omega)$, $\mathcal{A}=(A_{0},A)$,
 and using the expressions for $B_{d}$ and $B_{g}$,  see \eqref{Exponentially decaying solution} and \eqref{Exponentially growing solution}, 
 we get 
 \[
 J:=\int\limits_{\mathbb{R}^{1+n}}\wt{\omega}\cdot\mathcal{A}(t,x)e^{-i\xi\cdot(t,x)}\exp\left(\int\limits_{0}^{\infty}\wt{\omega}\cdot\mathcal{A}(t+s,x-s\omega)\right)\D x \D t=0,
 \]
 where $\xi\cdot(1,-\omega)=0$ for all  $\omega\ \text{with} \
 |\omega-\omega_{0}|<\ve$. We decompose  $\mathbb{R}^{1+n}=\mathbb{R}(1,-\omega)\oplus(1,-\omega)^{\perp}$. We then get 
 \begin{align*}
 J&=\int\limits_{(1,-\omega)^{\perp}}e^{-\I\xi\cdot k}\left(\int\limits_{\mathbb{R}}\wt{\omega}\cdot\mathcal{A}(k+\tau(1,-\omega))\exp\left(\int\limits_{\tau}^{\infty}\wt{\omega}\cdot\mathcal{A}(k+s(1,-\omega))\D s\right)\sqrt{2} \D \tau\right)\D k.
 \intertext{Here $\D k$ is the Lebesgue measure on the hyperplane $(1,-\omega)^{\perp}$.}
 J&=-\sqrt{2}\int\limits_{(1,-\omega)^{\perp}}e^{-\I\xi\cdot k}\left(\int\limits_{\mathbb{R}}\partial_{\tau}\exp\left(\int\limits_{\tau}^{\infty}\wt{\omega}\cdot\mathcal{A}(k+s(1,-\omega))\D s\right)\D\tau\right)\D k\\
 &=-\sqrt{2}\int\limits_{(1,-\omega)^{\perp}}e^{-i\xi\cdot k}\left(1-\exp\left(\int\limits_{\mathbb{R}}\wt{\omega}\cdot\mathcal{A}(k+s(1,-\omega))\D s\right)\right)\D k\\
 &=-\sqrt{2}\ \mathcal{F}_{(1,-\omega)^{\perp}}\left( 1-\exp\left(\int\limits_{\mathbb{R}}\wt{\omega}\cdot\mathcal{A}(k+s(1,-\omega))\D s\right)\right)(\xi).
 \end{align*}
 \color{black}
 Since the integral $J=0$, we have that 
 \[
 \mathcal{F}_{(1,-\omega)^{\perp}}\left( 1-\exp\left(\int\limits_{\mathbb{R}}\wt{\omega}\cdot\mathcal{A}(k+s(1,-\omega))\D s\right)\right)(\xi)=0,\  k\in (1,-\omega)^{\perp}.
 \]
 
 This gives us 
 \begin{align*}
 \exp\left(\int\limits_{\mathbb{R}}\wt{\omega}\cdot\mathcal{A}(k+s(1,-\omega))\D s\right)=1, \mbox{ for all } \ k\in (1,-\omega)^{\perp} \mbox{ and all } \omega\ \text{with} \
 |\omega-\omega_{0}|<\ve.
 \end{align*}
 Thus we deduce that
 \begin{align}\label{final equation for vector field}
 \int\limits_{\mathbb{R}}\wt{\omega}\cdot\mathcal{A}(t+s,x-s\omega)\D s=0,\ \ \ (t,x)\in (1,-\omega)^{\perp}\mbox{ and } \mbox{for all } \omega\ \text{with} \
 |\omega-\omega_{0}|<\ve.
 \end{align}
 Now we show that the orthogonality condition  $(t,x)\in (1,-\omega)^{\perp}$, can be removed using  a change of variables as used in \cite{Salazar_time-dependent_first_order_perturbation_2013}. 
 
 Consider any $(t,x)\in \Rb^{1+n}$. Then 
 \[
 \lb \frac{t+x\cdot \omega}{2}, x+\frac{(t-x\cdot \omega)\omega}{2}\rb
 \] 
 is a point on $(1,-\omega)^{\perp}$, and we have
 \[
 \int\limits_{\Rb} \wt{\omega}\cdot \Ac\lb \frac{t+x\cdot \omega}{2} +\wt{s}, x+\frac{(t-x\cdot \omega)\omega}{2} -\wt{s}\omega\rb \D \wt{s} =0.
 \] 
 Consider the following change of variable in the above integral:  
 \[s= \frac{x\cdot \omega-t}{2}+\wt{s}.
 \]
 Then we have 
 \[
 \int\limits_{\Rb} \wt{\omega}\cdot \Ac\lb t+s, x-s\omega\rb \D s=0.
 \]
 Therefore, we have that 
 \[
 \int\limits_{\mathbb{R}}(1,-\omega)\cdot\mathcal{A}(t+s,x-s\omega)\D s=0 \mbox{ for all } (t,x)\in \mathbb{R}^{1+n} \mbox{ and for all } \omega \text{ with } 
 |\omega-\omega_{0}|<\ve.
 \]

 To conclude the uniqueness result for $\Ac$, we prove the following lemma.
 
 \begin{lemma}\label{support theorem}
 	Let $n\geq 3$ and $F=(F_{0},F_{1},\cdots,F_{n})$ be a real-valued vector field whose components are  $C_{c}^{\infty}(R^{1+n})$ functions.  Suppose 
 	\begin{align*}
 	LF(t,x,\omega):=\int\limits_{\mathbb{R}}(1,\omega)\cdot F(t+s, x+s\omega)\D s=0
 	\end{align*}
 	for all  $\omega\in\mathbb{S}^{n-1}$ near  a fixed $\omega_{0}\in \mathbb{S}^{n-1}$ and for all $(t,x)\in \Rb^{1+n}$.
 	Then {there exists}\color{black}\  a $\Phi\in C_{c}^{\infty}(\mathbb{R}^{1+n})$ such that $F(t,x)=\nabla_{t,x}\Phi(t,x)$. 
 \end{lemma}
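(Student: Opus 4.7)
The plan is to Fourier-transform the hypothesis in $(t,x)$ and exploit the resulting algebraic constraint on $\hat F$ in frequency space. Write $(\tau,\xi)\in\Rb^{1+n}$ for the Fourier dual variable, $\hat F=(\hat F_0,\hat F_{\mathrm{sp}})$ with $\hat F_{\mathrm{sp}}=(\hat F_1,\ldots,\hat F_n)$, and set $\xi_0:=\tau$. A direct computation (changing variables $(t',x')=(t+s,x+s\omega)$ to separate the $s$-integration from the $(t,x)$-Fourier kernel) shows that the Fourier transform of $LF(\cdot,\cdot,\omega)$ in $(t,x)$ equals $2\pi\delta(\tau+\xi\cdot\omega)\bigl[\hat F_0(\tau,\xi)+\omega\cdot \hat F_{\mathrm{sp}}(\tau,\xi)\bigr]$. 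Since $F\in C_c^\infty$, $\hat F$ is entire (Paley--Wiener), and the hypothesis $LF\equiv 0$ reads as the pointwise identity
\[
\hat F_0(\tau,\xi) + \omega\cdot \hat F_{\mathrm{sp}}(\tau,\xi) = 0 \qquad \text{whenever } \tau + \xi\cdot\omega = 0,
\]
valid for every $\omega$ in an open neighborhood $V\subset \Sb^{n-1}$ of $\omega_0$.

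Next, I would let $\omega$ vary. Let $C\subset\Rb^{1+n}$ denote the open cone of $(\tau,\xi)$ with $|\tau|<|\xi|$ whose slice $S_{\tau,\xi}:=\{\omega\in \Sb^{n-1}:\xi\cdot\omega=-\tau\}$ meets $V$; the intersection $V\cap S_{\tau,\xi}$ is then open in $S_{\tau,\xi}$. Fix $(\tau,\xi)\in C$ and any $\omega_0\in V\cap S_{\tau,\xi}$; subtracting the above identity at $\omega_0$ from that at another $\omega\in V\cap S_{\tau,\xi}$ yields $(\omega-\omega_0)\cdot \hat F_{\mathrm{sp}}(\tau,\xi)=0$. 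The slice $S_{\tau,\xi}$ is an $(n-2)$-sphere inside the affine hyperplane $\{\omega:\xi\cdot\omega=-\tau\}\subset\Rb^n$; since $n\geq 3$ this sphere has positive dimension, so an open subset of it has full affine hull in the ambient hyperplane, and the secants $\omega-\omega_0$ span the whole linear hyperplane $\xi^\perp\subset\Rb^n$. Consequently $\hat F_{\mathrm{sp}}(\tau,\xi)$ is parallel to $\xi$, say $\hat F_{\mathrm{sp}}=\beta(\tau,\xi)\xi$, and plugging back gives $\hat F_0(\tau,\xi)=\beta(\tau,\xi)\tau$. Thus
\[
\xi_i\hat F_j(\tau,\xi)-\xi_j\hat F_i(\tau,\xi)=0,\qquad 0\le i,j\le n,
\]
on $C$. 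Each $\hat F_i$ is entire of exponential type, so these polynomial identities extend by analytic continuation from $C$ to all of $\Rb^{1+n}$, and on inverse Fourier transform become the curl-free system $\PD_iF_j=\PD_jF_i$.

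Given that $F$ is a closed $1$-form on the simply connected $\Rb^{1+n}$, the Poincar\'e lemma yields a smooth primitive $\Phi\in C^\infty(\Rb^{1+n})$ with $F=\n_{t,x}\Phi$. Normalizing $\Phi$ to vanish at a base point in the unbounded component of $\Rb^{1+n}\setminus \mathrm{supp}(F)$ and invoking path-independence of the line integral, we conclude $\Phi\equiv 0$ throughout this (connected, since $1+n\geq 2$) component. Hence $\mathrm{supp}\,\Phi$ is contained in the compact set consisting of $\mathrm{supp}(F)$ together with the bounded components of its complement, so $\Phi\in C_c^\infty(\Rb^{1+n})$. The principal obstacle in this plan is the parallelism step: the hypothesis $n\geq 3$ enters essentially there, since for $n=2$ the slice $S_{\tau,\xi}$ is $0$-dimensional and one cannot exploit variation of $\omega$ within a slice. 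The analytic-continuation and Poincar\'e-lemma arguments are then routine.
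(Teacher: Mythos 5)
Your proof is correct, and it takes a genuinely different route from the paper's. The paper first applies a directional derivative $\eta\cdot\nabla_{t,x}$ to $LF$ and subtracts the exact identity $\int_{\Rb}\frac{\D}{\D s}(\eta\cdot F)(t+s,x+s\omega)\,\D s=0$, thereby reducing the hypothesis to the vanishing of a transverse light ray transform of the $2$-form $h_{ij}=\PD_jF_i-\PD_iF_j$; it then Fourier-transforms along the hyperplanes $(1,\omega)^\perp$ to get $\sum\omega^i\eta_j\widehat h_{ij}(\zeta)=0$, and runs an explicit linear-algebra argument with concrete one-parameter families such as $\omega_k(\alpha)=\cos\alpha\,e_1+\sin\alpha\,e_k$, $k\geq 3$, to show $\widehat h_{ij}(\zeta)=0$ for all space-like $\zeta$ in a neighborhood of $(0,e_2)$, before invoking Paley--Wiener and analytic continuation. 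You instead Fourier-transform the hypothesis directly, obtaining $(1,\omega)\cdot\widehat F(\tau,\xi)=0$ on each hyperplane $\tau+\xi\cdot\omega=0$ with $\omega$ ranging over an open set $V$; varying $\omega$ over the slice $S_{\tau,\xi}$ (an $(n-2)$-sphere, of positive dimension precisely because $n\geq 3$) and using that the secants span $\xi^{\perp}$, you deduce $\widehat F(\tau,\xi)\parallel(\tau,\xi)$ on an open cone, whence $\xi_i\widehat F_j-\xi_j\widehat F_i\equiv 0$ by real-analyticity of $\widehat F$ (Paley--Wiener). This reaches the same conclusion $\D F=0$ but avoids the intermediate passage to the transverse ray transform of $h$, and the parallelism/affine-hull argument replaces the paper's explicit rotation computation --- a cleaner path, and both use $n\geq 3$ at the same essential spot (richness of directions within a slice). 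You also treat the compact support of the primitive $\Phi$ more carefully than the paper does, by normalizing it to vanish on the unbounded component of $\Rb^{1+n}\setminus\operatorname{supp}F$ and using path-independence.
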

 \begin{proof}
 	The proof follows {from arguments similar} \color{black} to the ones used in \cite{Stefanov_Support_Theorem_Lorentzian_Manifold_2017,Siamak_Support_theorem_2017}, where support theorems involving light ray transforms have been proved.
 	
 	Denote $\omega=(\omega^{1},\cdots,\omega^{n})\in \Sb^{n-1}$. We write
 	\[
 	LF(t,x,\omega)=\int\limits_{\mathbb{R}}\sum_{i=0}^{n}\omega^{i}F_{i}(t+s,x+s\omega)\D s,\mbox{ where } \omega^{0}=1.
 	\]
 	Let $\eta=(\eta_{0},\eta_{1},\cdots,\eta_{n})\in \Rb^{n+1}$ be arbitrary. We have 
 	\begin{equation}\label{I equation}
 	(\eta\cdot\nabla_{t,x})LF(t,x,\omega)=\int\limits_{\mathbb{R}}\sum_{i,j=0}^{n}\omega^{i}\eta_{j}\partial_{j}F_{i}(t+s,x+s\omega) \D s.
 	\end{equation}
 	By fundamental theorem of calculus, we have 
 	\begin{align*}
 	\int\limits_{\mathbb{R}}\frac{\D }{\D s}(\eta\cdot F)(t+s,x+s\omega)\D s=0.
 	\end{align*}
 	But 
 	\[
 	\frac{\D }{\D s}(\eta\cdot F)(t+s,x+s\omega)=
 	\sum_{i,j=0}^{n}\omega^{i}\eta_{j}\partial_{i}F_{j}(t+s,x+s\omega).
 	\]
 	with $\partial_{0}=\partial_{t}$ and $\partial_{j}=\partial_{x_{j}}$ for  $j=1,2,\cdots,n$.
 	Therefore 
 	\begin{equation}\label{II equation}
 	\int\limits_{\mathbb{R}}\sum_{i,j=0}^{n}\omega^{i}\eta_{j}\partial_{i}F_{j}(t+s,x+s\omega)\D s=0.
 	\end{equation}
 	Subtracting \eqref{II equation} from \eqref{I equation}, we get,
 	\[
 	\lb \eta\cdot \n_{t,x}\rb LF(t,x,\omega)=\int\limits_{\mathbb{R}}\sum_{i,j=0}^{n}\omega^{i}\eta_{j}\left(\partial_{j}F_{i}-\partial_{i}F_{j}\right)(t+s,x+s\omega)\D s.
 	\]
 	Since $LF(t,x,\omega)=0$ for all $\omega$ near $\omega_{0}$, and for all $(t,x)\in \Rb^{1+n}$, we have,
 	\begin{equation}\label{transverse ray transform}
 	Ih(t,x,\omega,\eta):=\int\limits_{\mathbb{R}}\sum_{i,j=0}^{n}\omega^{i}\eta_{j}h_{ij}(t+s,x+s\omega)\D s=0
 	\end{equation}
 	 where $h$ is $\lb 1+n\rb\times\lb 1+n\rb$ matrix with entries $h_{ij}$ given by 
 	\[h_{ij}(t,x)=\lb \PD_{j}F_{i}-\PD_{i}F_{j}\rb(t,x) \ \text{for} \ 0\leq i,j\leq n.\]
 	\color{black}Next we will show that the $(n+1)$ dimensional Fourier transform 
 	$\widehat{h}_{ij}(\zeta)=0$ for all space-like vectors $\zeta$ near the set $\left\{\zeta:\ \zeta\cdot(1,\omega)=0, \omega \mbox{ near } \omega_{0}\right\}$. 
 	
 	We have 
 	\[
 	\omega^{i}\eta_{j}\wh{h}_{ij}(\zeta)=\int\limits_{\Rb^{1+n}} e^{-\I (t,x)\cdot \zeta}\omega^{i}\eta_{j} h_{ij}(t,x) \D t \D x,
 	\]
 	where $\omega,\eta$ are fixed and $\zeta\in (1,\omega)^{\perp}$.  Decomposing 
 	\[
 	\Rb^{1+n}= \Rb(1,\omega)+k, \mbox{ where } k\in (1,\omega)^{\perp},
 	\]
 	we get, 
 	\[
 	\omega^{i}\eta_{j}\wh{h}_{ij}(\zeta)=\sqrt{2}\int\limits_{(1,\omega)^{\perp} }e^{-\I k\cdot \zeta}\int\limits_{\Rb} \omega^{i}\eta_{j} h_{ij}(k+s(1,\omega)) \,\D s \D k.
 	\]
 	Using \eqref{transverse ray transform}, we get that, 
 	\begin{equation}\label{Final 1-form equation}
 	\sum\limits_{i,j=0}^{n}\omega^{i}\eta_{j} \wh{h}_{ij}(\zeta)=0, \mbox{ for all } \zeta\in (1,\omega)^{\perp}, \mbox{ for all } \eta\in \Rb^{1+n}, \mbox{ and for all } \omega \mbox{ near } \omega_{0} \mbox{ with } \omega^{0}=1.
 	\end{equation}
 	Let us take for $\eta$ the standard basis vectors in $\Rb^{1+n}$. 
 	
 	Now $\{e_{j}, 1\leq j\leq n\}$ be the standard basis of $\Rb^{n}$. Let $\omega_{0}= e_{1}$, and assume that $\zeta^{0}=(0,e_{2})$. Then this is a space-like vector that satisfies the condition $\zeta^{0}\in (1,\omega_{0})^{\perp}$. We will show that $\wh{h}_{ij}(\zeta^{0})=0$ for all $0\leq i,j\leq n$. Consider the collection of the following unit vectors for $3\leq k\leq n$: 
 	\[
 	\omega_{k}(\A)=\cos (\A) e_{1}+\sin (\A) e_{k}.
 	\]
 	Note that for each $3\leq k\leq n, \omega_{k}(\A)$ is near $\omega_{0}$ for $\A$ near $0$. Also $\zeta^{0}\in (1,\omega_{k}(\A))^{\perp}$ for all such $\A$ and for all $3\leq k\leq n$. Let $\eta$ be the collection of standard basis vectors in $\Rb^{1+n}$.
 	
 	Substituting the above vectors $\omega_{k}(\A)$ and $\eta$, we get the following equations: 
 	
 	\[
 	\wh{h}_{0j}(\zeta^{0}) +\cos (\A) \wh{h}_{1j}(\zeta^{0}) + \sin (\A)\wh{h}_{kj}(\zeta^{0})=0 \mbox{ for all } 3\leq k\leq n,  0\leq j\leq n \mbox{ and } \A \mbox{ near } 0.
 	\]
 	
 	From this we have that 
 	\[
 	\wh{h}_{0j}(\zeta^{0})=\wh{h}_{1j}(\zeta^{0})=\wh{h}_{kj}(\zeta^{0})=0 \mbox{ for all } 3\leq k\leq n \mbox{ and } 0\leq j\leq n.
 	\]
 	
 	Using the fact that 
 	\[
 	\widehat{h}_{ij}(\zeta^{0})=-\widehat{h}_{ji}(\zeta^{0}) \mbox{ for all } 0\leq i,j\leq n,
 	\] we get 
 	\begin{align}\label{vanishing of hij at zeta not}
 	\widehat{h}_{ij}(\zeta^{0})=0,\mbox{ for all }  0\leq i,j\leq n \mbox{ with } \zeta^{0}=(0,e_{2}).
 	\end{align}	
 	
 	Now our goal is to show that the Fourier transform $\wh{h}_{ij}(\zeta)$ vanishes for all space-like vectors $\zeta$ in a small enough neighborhood of $(0,e_{2})$. To show this, assume $\zeta=(\tau,\xi)$ be a space-like vector close to $(0,e_{2})$. Without loss of generality, assume that $\zeta$ is of the form 
 	\[
 	\zeta=(\tau,\xi)  \mbox{ where } |\xi|=1 \mbox{ and } |\tau|<1.
 	\]
 	For instance, we can take $\zeta$ as 
 	\[
 	\zeta = (-\sin \vp,\xi),
 	\]
 	where $\vp$ is close to $0$ and $\xi=(\xi_{1},\cdots, \xi_{n})$ is written in spherical coordinates as 
 	
 	\begin{align*}
 	& \xi_{1}=\sin \vp_{1}\cos \vp_{2}\\
 	& \xi_{2}=\cos \vp_{1}\\
 	&\xi_{3}=\sin \vp_{1} \sin \vp_{2}\cos \vp_{3}\\
 	&\vdots\\
 	&\xi_{n-1}=\sin \vp_{1}\cdots \sin \vp_{n-2}\cos \vp_{n-1}\\
 	& \xi_{n-1}=\sin \vp_{1}\cdots \sin \vp_{n-2}\sin \vp_{n-1}.
 	\end{align*}
 	Note that if $\vp_{1},\cdots,\vp_{n-1}$ are close to $0$, then $\xi$ is close to $e_{2}$.	
 	
 	Let 
 	\[
 	\omega_{\vp}=\cos (\vp) e_{1}+\sin (\vp) e_{2}.
 	\]
 	
 	Also let 
 	\[
 	\omega_{\vp}^{k}(\A)= \cos (\A)\cos (\vp) e_{1} + \sin (\vp) e_{2}+ \sin (\A) \cos (\vp) e_{k} \mbox{ for } k\geq 3.
 	\]
 	Since $\vp$ is close to $0$, and letting $\A$ close enough to $0$, we have that $\omega_{\vp}$ and $\omega_{\vp}^{k}$ are close to $\omega_{0}$.
 	
 	Now, let $A$ be an orthogonal transformation such that $A e_{2}= \xi$, where $\xi$ is as above. With this $A$, consider the vectors 
 	\[
 	\omega_{\zeta}=A \omega_{\vp} \mbox{ and } \omega_{\zeta}^{k}=A\omega_{\vp}^{k} \mbox{ for } k\geq 3.
 	\]
 	
 	We have that 
 	\[
 	\zeta \in (1,\omega_{\zeta})^{\perp}.
 	\]
 	To see this, note that $\zeta=(-\sin \vp, \xi)$ and we have 
 	\[
 	-\sin \vp+ \langle A e_{2},A\omega_{\vp}\rangle = -\sin \vp+ \langle e_{2},\omega_{\vp}\rangle = 0.
 	\]
 	Also similarly, we have that, 
 	\[
 	\zeta\in (1,\omega_{\zeta}^{k})^{\perp} \mbox{ for all } k\geq 3.
 	\]
 	Using the standard basis vectors for $\eta_{j}$ and the vectors $(1,\omega_{\zeta}^{k})$ in \eqref{Final 1-form equation}, we get, for all $k\geq 3$, 
 	\[
 	\wh{h}_{0j}(\zeta) +\sin \vp \lb \sum\limits_{i=1}^{n}a_{i2}\wh{h}_{ij}(\zeta)\rb + \cos \A \cos \vp \lb\sum \limits_{i=1}^{n}a_{i1}\wh{h}_{ij}(\zeta)\rb +\sin \A \cos \vp \lb\sum \limits_{i=1}^{n}a_{ik}\wh{h}_{ij}(\zeta)\rb =0
 	\]
 	This then implies that 
 	\begin{align*}
 	&\wh{h}_{0j}(\zeta) +\sin \vp \lb  \sum\limits_{i=1}^{n}a_{i2}\wh{h}_{ij}(\zeta)\rb=0\\
 	&\cos \vp \lb\sum \limits_{i=1}^{n}a_{i1}\wh{h}_{ij}(\zeta)\rb=0\\
 	&\cos \vp \lb\sum \limits_{i=1}^{n}a_{ik}\wh{h}_{ij}(\zeta)\rb =0
 	\end{align*}
 	Letting $j=0$, since $\wh{h}_{00}(\zeta)=0$, we have, 
 	\begin{align*}
 	\sum\limits_{i=1}^{n} a_{ij} \wh{h}_{i0}(\zeta)=0 \mbox{ for all } 1\leq j\leq n.
 	\end{align*}
 	Since $A$ is an invertible matrix, we have that $\wh{h}_{i0}(\zeta)=0$ for all $1\leq i\leq n$.
 	Now proceeding similarly and using the fact that $h_{ij}$ is alternating, we have that $\wh{h}_{ij}(\zeta)=0$ for all $0\leq i,j\leq n$. The same argument as above also works for $r\zeta$, where $\zeta$ is as above and $r>0$. 
 	
 	Since the support of all $h_{ij}$ is a compact subset of $\mathbb{R}^{1+n}$, by Paley-Wiener theorem, we have $\widehat{h}_{ij}(\zeta)=0$ $\forall$ $i,j=0,1,2,...,n$. Hence, by Fourier inversion formula, we see that $h_{ij}(t,x)=0$ $\forall$ $(t,x)\in\mathbb{R}^{1+n}$. In other words, the exterior derivative $\D F=0$.  Using Poincar\'e lemma, we have that there exists a $\Phi(t,x) \in C_{c}^{\infty}(\Rb^{1+n})$ such that $F=\nabla_{t,x}\Phi$.
 \end{proof}
  Now going back to the uniqueness result for $\Ac$, we have that $\Ac$ is compactly supported in $Q$ and since $Q$ is simply connected, by Poincar\'e Lemma, there exists a $\Phi\in C_{c}^{\infty}(Q)$ such that 
 	$\Ac=\n_{t,x}\Phi$. 
 
 {
 	\begin{remark}
 		The proof requires dimensions $n\geq 3$. In the case of 2-dimensions, knowledge of the light ray transform along directions $(1,-\omega)$ for $\omega$ near $-\omega_{0}$ is required as well for invertibility modulo potential fields; see \cite{Siamak_Support_theorem_2017}. Since we do not have this information, our approach will not give the result in $2$-dimensions. 
 	\end{remark}
 }
 
 \subsection{Recovery of potential $q$}\label{Uniqueness for q}
 In Section \ref{Recovery of vector potential}, we showed  that there exist a $\Phi$ such that $(\mathcal{A}_{2}-\mathcal{A}_{1})(t,x)=\nabla_{t,x}\Phi(t,x)$. After replacing the pair $(\mathcal{A}^{(1)},q_{1})$ by $(\mathcal{A}^{(3)},q_{3})$ where $\mathcal{A}^{(3)}=\mathcal{A}^{(1)}+\nabla_{t,x}\Phi$ and $q_{3}=q_{1}$, we conclude that $\mathcal{A}^{(3)}=\mathcal{A}^{(2)}$. Therefore substituting \eqref{Exponentially growing solution} for $u_{2}$ and \eqref{Exponentially decaying solution} for $v$, and letting $h\to 0^{+}$ in 
 \[
 \int \limits_{Q} q(t,x) u_{2}(t,x) v(t,x) \D x \D t=\int\limits_{\Omega}\partial_{t}u(T,x) v(T,x)\D x\\
 -\int\limits_{\Sigma\setminus{G}}\partial_{\nu}u(t,x)\overline{v(t,x)}\D S_{x}\D t,
 \]  we get,
 \[
 \int\limits_{\Rb^{1+n}} q(t,x) e^{-\I \xi \cdot (t,x)} \D x \D t=0 \mbox{ for all } \xi \in (1,-\omega)^{\perp} \mbox{ and } \omega \mbox{ near } \omega_{0}.
 \]

 The set of all $\xi$ such that $\xi\in (1,-\omega)^{\perp}$ for $\omega$ near $\omega_{0}$ forms an open cone and since $q\in L^{\infty}(Q)$ has compact support, using Paley-Wiener theorem we conclude that $q_{1}(t,x)=q_{2}(t,x)$ for all $(t,x)\in Q$. This completes the proof of Theorem \ref{Main Theorem}.

 \section*{Acknowledgments}
 
 The authors thank Plamen Stefanov and Siamak RabieniaHaratbar for very useful discussions regarding the light ray transform.  MV thanks Gen Nakamura and Guang-Hui Hu for discussions on this problem. 
 VK was supported in part by NSF grant DMS 1616564. Both authors  benefited
 from the support of Airbus Corporate Foundation Chair grant titled ``Mathematics of Complex Systems''
 established at TIFR CAM and TIFR ICTS, Bangalore, India.

\end{document}